\documentclass[11pt]{article}
\topmargin=-0.5cm \textwidth 16cm \textheight 23cm
\oddsidemargin=-0.5cm
\usepackage[english]{babel}
\usepackage{amsmath,amsthm}
\usepackage{amsfonts}
\usepackage[comma,numbers,square,sort&compress]{natbib}
\usepackage{graphicx,subfigure,amsmath,amssymb,mathrsfs,amsfonts,amstext,amsthm}
\usepackage{latexsym, amssymb}
\usepackage{graphicx}
\usepackage{subfigure}
\usepackage{pgf,tikz}
\usepackage{pstricks}
\newtheorem{thm}{Theorem}[section]
\newtheorem{cor}[thm]{Corollary}
\newtheorem{lem}[thm]{Lemma}
\newtheorem{prop}[thm]{Proposition}
\theoremstyle{definition}
\newtheorem{defn}[thm]{Definition}
\theoremstyle{remark}
\newtheorem{rem}[thm]{Remark}
\numberwithin{equation}{section}
\begin{document}

\title{\bfseries\textrm{Properties of Quasi-synchronization Time of High-dimensional Hegselmann-Krause Dynamics}
\footnotetext{
Wei Su, Meiru Jiang and Yongguang Yu are with School of Mathematics and Statistics, Beijing Jiaotong University, Beijing 100044, China, {\tt su.wei@bjtu.edu.cn, 23121692@bjtu.edu.cn, ygyu@bjtu.edu.cn}. Ge Chen is with National Center for Mathematics and Interdisciplinary Sciences \& Key Laboratory of Systems and
Control, Academy of Mathematics and Systems Science, Chinese Academy of Sciences, Beijing 100190,
China, {\tt chenge@amss.ac.cn}. 
}
 }
\author{Wei Su, Meiru Jiang, Yongguang Yu, Ge Chen }

%
\date{}%
\maketitle
\begin{abstract}
The behavior of one-dimensional Hegselmann-Krause (HK) dynamics driven by noise has been extensively studied. Previous research has indicated that within no matter the bounded or the unbounded space of one dimension, the HK dynamics attain quasi-synchronization (synchronization in noisy case) in finite time. However, it remains unclear whether this phenomenon holds in high-dimensional space. This paper investigates the random time for quasi-synchronization of multi-dimensional HK model and reveals that the boundedness and dimensions of the space determine different outcomes. To be specific, if the space is bounded, quasi-synchronization can be attained almost surely for all dimensions within a finite time, whereas in unbounded space, quasi-synchronization can only be achieved in low-dimensional cases (one and two). Furthermore, different integrability of the random time of various cases is proved.
\end{abstract}

\textbf{Keywords}: Stopping time, noise-driven synchronization, high-dimensional Hegselmann-Krause dynamics, opinion dynamics
\section{Introduction}\label{Section:introduction}
The Hegselmann-Krause (HK) model was first introduced in the field of opinion dynamics to describe the opinion evolution of individuals who interact with others and whose opinions are influenced by those of the people around them \cite{Hegselmann2002}. In the HK model, the individuals update their opinions over time by taking the average of the opinions of all their neighbors whose opinions are close enough to their own. This closeness is determined by a bounded confidence threshold, such that agents influence each other's opinion only if their opinions lay within the confidence threshold. Though initially proposed in the context of opinion dynamics, the HK model captures a fundamental self-organizing mechanism in complex systems. Beyond its original application, it has also been adopted as a basic game learning algorithm \cite{Acemoglu2011, Basar2015} and has found widespread use in diverse fields, including demand response programs in smart grids \cite{Nava2015} and hybrid energy storage management \cite{Abdelghany2024}.

Among the many properties, one of the interesting features of the model is that it can be synchronized by random noise. This phenomenon, also known as ``noise-induced order" in self-organizing systems, was first found in some simulation studies \cite{Mas2010,Carro2013,Pineda2013}. Then the analysis of the phenomenon was considered based on some noisy HK-type models. A proof based on the original discrete HK model in bounded state space (where the values of states are typically confined to a closed interval in $\mathbb{R}$) was obtained in \cite{Su2017auto} and a continuous case in \cite{Wang2017}. Then the cases of HK model in unbounded space (where the values of states are allowed to take any value in $\mathbb{R}$) \cite{Su2019tac} and also the Deffuant-Weisbuch (DW) model (another typical bounded confidence opinion dynamics but with asynchronous updating rule\cite{Deffuant2000}) were proved \cite{Su2022cs}. However, the above conclusions are all considered in one-dimensional space, while the question of whether this phenomenon persists in higher dimensions has remained unanswered until now.

The behavior of networked dynamical systems can change significantly with the dimensionality of the space, as highlighted by previous studies of networked systems. For instance, the synchronization of coupled oscillators is more challenging in higher dimensions, as their dynamics become more complex \cite{Parsegov2017}. Similarly, synchronization in networks of chaotic systems becomes more difficult as the dimensionality of the space increases \cite{Balanov2009}. The ``curse of dimensionality" is a well-known phenomenon in various fields like machine learning, optimization, data mining, and control \cite{Bellman1961,Aggarwal2001}. These studies suggest that the behavior of the HK dynamics in higher dimensions may be very different from that observed in one-dimensional space.

The study of high-dimensional HK models has primarily focused on the influence of spatial structure and network topology on the evolution of opinion dynamics. Synchronization, or the emergence of consensus, reflects the ability of individuals to reach agreement or form coordinated clusters of opinions, a key aspect of collective behavior. This phenomenon is a fundamental problem in the study of opinion dynamics models, including the multi-dimensional HK model. For instance, Lorenz explored various factors affecting the evolution of high-dimensional HK models in deterministic settings (see \cite{Lorenz2007, Lorenz2008} and references therein). Further studies on multi-dimensional HK dynamics and their variants can be found in \cite{Basar2015, Nedic2012, Parsegov2017, Chazelle2017, Pasquale2022}, as well as in \cite{Bernardo2024} (and references therein). In the stochastic case, Bolychev et al. investigated the probability laws governing noisy high-dimensional HK models \cite{Bolychev2021}.
Despite extensive research on high-dimensional HK dynamics and synchronization under various conditions, the phenomenon of synchronization under the influence of noise in high-dimensional spaces remains unexplored. This gap motivates our study, which aims to provide a deeper understanding of how noise shapes synchronization in high-dimensional systems.

In this paper, we extend previous findings on one-dimensional HK dynamics by examining the behavior of multi-dimensional HK models driven by noise. Our main focus is on the stopping time, which is the time instance when the system reaches quasi-synchronization. By investigating the integrability of stopping time, we identify different phenomena that occur as a function of both the boundedness and the dimensionality of the space. Specifically, we find that when the space is bounded, the systems can reach quasi-synchronization in finite time for all dimensions, and the stopping time is integrable. In unbounded space, the systems only reach quasi-synchronization for low-dimensional systems (typically one- and two-dimension) and the stopping time is not integrable. In higher-dimensional spaces, however, the presence of noise can only result in a synchronization with a positive probability. The results fully reveal that noisy HK dynamics behave quite differently in high-dimensional space.

Moreover, as a fundamental mechanism of self-organizing processes, our findings on unbounded HK dynamics highlight the critical role of space boundedness in the emergence of self-organizing order. This aligns with experimental studies in other fields, where boundedness has been shown to be essential for the formation of ordered structures \cite{Martin2003, Hanczyc2003, Gu2025}. Thus, our work not only advances the theoretical understanding of HK dynamics but also provides insights into the broader principles governing self-organization in complex systems.

The rest of the paper is organized as follows. In the next section, we provide an introduction of the HK model in high-dimensional spaces and some preliminary definitions. Section \ref{Section:Results} describes our main results in different dimensional spaces. In Section \ref{Section:Conclusions}, we conclude the paper and discuss the implications of our findings for future research in the dynamics of networked systems in high-dimensional spaces.

\section{Model and preliminaries}\label{Section:high-dim HK}
This paper considers the noisy HK model in bounded and unbounded spaces, called as \emph{bounded noisy HK model} and \emph{unbounded noisy HK model} respectively.
Denote $\mathcal{V}=\{1,2,\ldots,n\}$ as the set of $n$ agents, $x_i(t)\in(-\infty,\infty)^d$ or $[-1,1]^d, i\in\mathcal{V}, t\geq 0$ be the state of agent $i$ at time $t$ in bounded or unbounded space,
where $n\geq 2$ and $d\geq 1$ are two integers. Let
$\epsilon$ be a positive constant representing the confidence threshold (or neighbor radius) of agents, and
\begin{equation}\label{Model:neighbor}
 \mathcal{N}_i(t)=\{j\in\mathcal{V}\; \big|\; \| x_j(t)-x_i(t)\| \leq \epsilon\}
\end{equation}
be the set of neighbors of agent $i$ at time $t$, where $\|\cdot\|$ is the Euclidean norm.
Denote for $y\in \mathbb{R}$
\begin{equation*}
  (y)_{[-1,1]}=\left\{
              \begin{array}{ll}
                -1, & \hbox{$y<-1$} \\
                y, & \hbox{$-1\leq y\leq 1$,} \\
                1, & \hbox{$y>1$}
              \end{array}
            \right.
\end{equation*}
and $$(y_1,...,y_d)^{\top}_{[-1,1]}=\big(y_1)_{[-1,1]},...,(y_d)_{[-1,1]}\big)^{\top},$$
then the update rules for the bounded and the unbounded noisy HK dynamics can be formulated as: for all $ i\in\mathcal{V}, t\geq 0$,
\begin{equation}\label{Model:HKmodelbound}
x_i(t+1)=\bigg(\frac{1}{|\mathcal{N}_i(t)|}\sum\limits_{j\in \mathcal{N}_i(t)}x_j(t)+\xi_i(t+1)\bigg)_{[-1,1]}
\end{equation}
and
\begin{equation}\label{Model:HKmodelunbound}
x_i(t+1)=\frac{1}{|\mathcal{N}_i(t)|}\sum\limits_{j\in \mathcal{N}_i(t)}x_j(t)+\xi_i(t+1),
\end{equation}
respectively,
where $|\cdot|$ is the cardinality of a set. Here, we make assumptions on the confidence threshold $\epsilon$ for both bounded and unbounded cases. For the bounded model (\ref{Model:HKmodelbound}), $\epsilon$ is assumed to lie within the scale of the space, that is the largest distance of two points in $[-1,1]^d$, specifically $\epsilon \in (0, 2\sqrt{d}]$,  where $2\sqrt{d}$ is the maximum Euclidean distance between two points in the $d$-dimensional hypercube $[-1, 1]^d$. This ensures that interactions remain localized and meaningful within the bounded system. For the unbounded model (\ref{Model:HKmodelunbound}), $\epsilon$ can take any positive value, i.e., $\epsilon \in (0, \infty)$.  Formally, we assume
\begin{equation}\label{Model:confithre}
\begin{split}
  &\epsilon\in(0,2\sqrt{d}], \,\quad\text{for bounded system (\ref{Model:HKmodelbound}),} \\
  &\epsilon\in(0,\infty), \qquad\text{for unbounded system (\ref{Model:HKmodelunbound})}.
\end{split}
\end{equation}
In addition, $\{\xi_i(t), i\in\mathcal{V}, t>0\}$ are i.i.d. random noises satisfying $\mathbb{E}\, \xi_i(t)=\textbf{0}$, $\mathbb{E}\, \xi_i^2(t)\neq\textbf{0}$ and $\|\xi_i(t)\|\leq\delta$ a.s. with $\delta>0$.
In the context of opinion dynamics, noise captures the randomness of information exposure through social media and other communication channels. It reflects the unpredictable nature of how individuals encounter and process information, as well as the internal variability in their responses. This interpretation aligns with the complex and stochastic nature of opinion formation in the digital age.

To proceed further, we need to introduce some preliminary definitions as follows.

Let $\Omega$ be the sample space of $\xi(t)=\{\xi_i(t),i\in\mathcal{V}, t\geq 1\}$, $\mathcal{F}$ be the generated $\sigma-$algebra, and $\mathbb{P}$ be the probability measure on $\mathcal{F}$, so the underlying probability space is written as $(\Omega,\mathcal{F},\mathbb{P})$.
In addition, denote $\mathbb{E}\{\cdot\}$ as the expectation of a random variable.
In what follows, the ever appearing time symbols $t$ (or $T$, etc.) will all refer to the random variables $t(\omega)$ (or $T(\omega)$, etc.), and for simplicity, they will be still written as $t$ (or $T$).

The definition of \emph{quasi-synchronization} of the noisy model (\ref{Model:neighbor})-(\ref{Model:HKmodelunbound}) is then as in \cite{Su2017auto}:
\begin{defn}\label{robconsen}
Denote
\begin{equation}\label{Equa:defdVt}
  d_{\mathcal{V}}(t)=\max\limits_{i, j\in \mathcal{V}}\|x_i(t)-x_j(t)\|,
\end{equation}
and let
\begin{equation}\label{Equa:defTquasisyn}
  T'=\inf\{t\geq 0: d_{\mathcal{V}}(t')\leq \epsilon, \forall\,\, t'\geq t\}.
\end{equation}
If $\mathbb{P}\{T'<\infty\}=1$, we say a.s. the system (\ref{Model:neighbor})-(\ref{Model:HKmodelunbound}) achieves quasi-synchronization in finite time.
\end{defn}
In Definition \ref{robconsen}, $d_{\mathcal{V}}(t)$ measures the maximum state (or opinion) difference among agents. In deterministic systems, $d_{\mathcal{V}}(t)=0$ indicates perfect consensus, but in stochastic systems, noise prevents this, making exact synchronization unattainable. Quasi-synchronization, defined as $d_{\mathcal{V}}(t)\leq\epsilon$, provides a practical measure for near-consensus in noisy environments. When quasi-synchronization is achieved, all individuals become mutual neighbors, and in the absence of noise, they would share a common opinion at the next time step. This makes quasi-synchronization a meaningful standard for assessing synchronization in real-world systems, where opinions converge to near-agreement despite random influences.

For the systems (\ref{Model:neighbor})-(\ref{Model:HKmodelunbound}), one can readily have the following property:
\begin{prop}[\cite{Su2017auto,Su2019tac}]\label{Prop:quasisynHK}
Assume  $\delta\leq \epsilon/2$ and
a.s. there exists a time  $T\geq 0$  such that $d_{\mathcal{V}}(T)\leq \epsilon$. Then, under bounded noisy HK model (\ref{Model:neighbor}) and (\ref{Model:HKmodelbound}) or unbounded noisy HK model (\ref{Model:neighbor}) and (\ref{Model:HKmodelunbound}), we have $d_{\mathcal{V}}(t)\leq \epsilon$ a.s. for all $t\geq T$.
\end{prop}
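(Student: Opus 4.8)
The plan is to prove the statement by induction on the integer-valued time steps $t \ge T$, reducing the whole claim to a single \emph{one-step preservation} lemma: if $d_{\mathcal{V}}(t)\le\epsilon$ holds at some time $t$, then $d_{\mathcal{V}}(t+1)\le\epsilon$ holds as well (a.s.). The base case $t=T$ is exactly the hypothesis $d_{\mathcal{V}}(T)\le\epsilon$, so once the preservation step is established, the conclusion $d_{\mathcal{V}}(t)\le\epsilon$ for all $t\ge T$ follows immediately by iteration. Since the event $\{\|\xi_i(t)\|\le\delta\text{ for all }i,t\}$ has probability one, I would fix an $\omega$ in this event and argue deterministically, which then yields the a.s. statement.

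The key structural observation driving the preservation step is that $d_{\mathcal{V}}(t)\le\epsilon$ forces every pair of agents to be mutual neighbors: by the definition \eqref{Model:neighbor} of $\mathcal{N}_i(t)$ and \eqref{Equa:defdVt}, we get $\|x_j(t)-x_i(t)\|\le d_{\mathcal{V}}(t)\le\epsilon$ for all $i,j$, hence $\mathcal{N}_i(t)=\mathcal{V}$ for every $i$. Consequently the deterministic part of each update collapses to the \emph{same} global average $\bar{x}(t)=\frac{1}{n}\sum_{j\in\mathcal{V}}x_j(t)$, independent of $i$. For the unbounded model \eqref{Model:HKmodelunbound} this gives $x_i(t+1)-x_j(t+1)=\xi_i(t+1)-\xi_j(t+1)$, so by the triangle inequality and the noise bound $\|\xi_i(t)\|\le\delta$,
\begin{equation*}
  \|x_i(t+1)-x_j(t+1)\| \le \|\xi_i(t+1)\|+\|\xi_j(t+1)\| \le 2\delta \le \epsilon,
\end{equation*}
where the last step uses the assumption $\delta\le\epsilon/2$. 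Taking the maximum over $i,j$ yields $d_{\mathcal{V}}(t+1)\le\epsilon$.

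For the bounded model \eqref{Model:HKmodelbound} the same computation applies to the pre-clamping quantities $\bar{x}(t)+\xi_i(t+1)$, and it remains to check that the coordinatewise truncation $(\cdot)_{[-1,1]}$ does not increase pairwise distances. This is where the only genuine subtlety lies, though it is mild: the map $y\mapsto (y)_{[-1,1]}$ is the metric projection onto the convex box $[-1,1]^d$, and such a projection is nonexpansive in the Euclidean norm, so $\|(u)_{[-1,1]}-(v)_{[-1,1]}\|\le\|u-v\|$ for all $u,v\in\mathbb{R}^d$. Applying this with $u=\bar{x}(t)+\xi_i(t+1)$ and $v=\bar{x}(t)+\xi_j(t+1)$ gives $\|x_i(t+1)-x_j(t+1)\|\le\|\xi_i(t+1)-\xi_j(t+1)\|\le 2\delta\le\epsilon$, exactly as in the unbounded case, completing the preservation step and hence the induction. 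I expect the nonexpansiveness of the truncation to be the main (and essentially the only) point requiring care; everything else reduces to the observation that mutual-neighborhood makes all agents share a common deterministic target so that their spread is governed purely by the $2\delta$ noise gap.
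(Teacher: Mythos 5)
Your proof is correct. The paper itself does not prove this proposition (it imports it from \cite{Su2017auto,Su2019tac}), but your argument is exactly the standard one underlying those references: mutual neighborhood collapses every deterministic update to the common average $\bar{x}(t)$, so the spread at $t+1$ is at most $2\delta\le\epsilon$; your one genuinely new step beyond the one-dimensional originals --- identifying the coordinatewise clamping $(\cdot)_{[-1,1]}$ as the Euclidean projection onto the convex box $[-1,1]^d$ and invoking its nonexpansiveness --- is precisely what is needed to handle the $d$-dimensional bounded model, and it is handled correctly.
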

%

\section{Main Results}\label{Section:Results}
Let
\begin{equation}\label{Equa:defstopT}
 T=\inf\{t\geq 0: d_\mathcal{V}(t)\leq \epsilon\}.
\end{equation}
 It can be observed that the random time $T$ defined in (\ref{Equa:defstopT}) is not the same as $T'$ in (\ref{Equa:defTquasisyn}). $T$ in (\ref{Equa:defstopT}) is a stopping time, while $T'$ in (\ref{Equa:defTquasisyn}) is not. However, by Proposition \ref{Prop:quasisynHK}, one can obtain that, when $\delta\leq \epsilon/2$, the two random time are equivalence, i.e., $\mathbb{P}\{T' ~in~ (\ref{Equa:defTquasisyn})=T~ in~ (\ref{Equa:defstopT})\}=1$. Thus in the following, we only consider the stopping time $T$ in (\ref{Equa:defstopT}).

\subsection{Case of bounded space}\label{Subsec:boundedspace}

In this part, we consider the random time for the system in bounded space. We have the following result.
\begin{thm}\label{Thm:mainbounded}
Consider the bounded HK system (\ref{Model:neighbor})-(\ref{Model:HKmodelbound}).
Suppose $\{\xi_i(t), i\in\mathcal{V}, t>0\}$ are i.i.d. with $\mathbb{E}\,\xi_1(t)=\textbf{0}$, $\mathbb{E}\, \xi_1^2(1)\neq\textbf{0}$ and $\delta\leq\epsilon/2$, then for all $d\geq 1$ and any given initial state $x(0)=(x_1(0),\ldots,x_n(0))\in [-1,1]^{n\times d}$, we have  $\mathbb{E}\,T<\infty$.
\end{thm}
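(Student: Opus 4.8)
The plan is to establish one uniform estimate --- that from \emph{every} configuration the system enters $\{d_{\mathcal V}\le\epsilon\}$ within a fixed number of steps with a probability bounded below --- and then convert this into finiteness of $\mathbb E\,T$ by a geometric (renewal) argument. First observe that $(x(t))_{t\ge 0}$ is a Markov chain on the compact set $[-1,1]^{n\times d}$: the right-hand side of (\ref{Model:HKmodelbound}) depends only on $x(t)$ and the fresh independent noise $\xi(t+1)$, and $T$ is a stopping time for its natural filtration. I would therefore prove the following claim: there exist an integer $L\ge 1$ and a constant $p\in(0,1]$, both \emph{independent of the initial state}, such that for every $x(0)\in[-1,1]^{n\times d}$,
\begin{equation*}
  \mathbb{P}\big(d_{\mathcal V}(L)\le\epsilon \,\big|\, x(0)\big)\ge p.
\end{equation*}

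The mechanism behind the claim is to let the noise \emph{herd} all agents into the corner $(-1,\dots,-1)$, exploiting the clipping $(\cdot)_{[-1,1]}$ that is special to the bounded model. Write $x_{i,k}(t)$ for the $k$-th component of $x_i(t)$ and set $M_k(t)=\max_{i\in\mathcal V}x_{i,k}(t)$. Since each averaged value $\frac{1}{|\mathcal N_i(t)|}\sum_{j\in\mathcal N_i(t)}x_{j,k}(t)$ is a convex combination of numbers that are at most $M_k(t)$, on the event that every noise component satisfies $\xi_{i,k}(t+1)\le -a$ for a fixed $a\in(0,\delta]$ we obtain
\begin{equation*}
  x_{i,k}(t+1)=\Big(\tfrac{1}{|\mathcal N_i(t)|}\textstyle\sum_{j\in\mathcal N_i(t)}x_{j,k}(t)+\xi_{i,k}(t+1)\Big)_{[-1,1]}\le \big(M_k(t)-a\big)_{[-1,1]},
\end{equation*}
hence $M_k(t+1)\le\max\{M_k(t)-a,\,-1\}$. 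Thus $M_k$ drops by at least $a$ at each such step until it is clamped at $-1$; after $L:=\lceil 2/a\rceil+1$ consecutive steps on which \emph{all} noise components (for every agent and coordinate) are $\le -a$, every coordinate of every agent is clipped to $-1$, so $x_i(L)=(-1,\dots,-1)$ for all $i$ and $d_{\mathcal V}(L)=0\le\epsilon$. Crucially, $L$ is controlled by $a$ and the fixed side-length $2$ only, not by $x(0)$, which is exactly the uniformity the claim needs.

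It remains to bound the probability of the herding event from below. By the i.i.d. assumption it factorizes, so the one-step event has probability $\big[\mathbb P(\xi_{1,k}(1)\le -a,\ \forall k)\big]^n$. This is where the hypotheses $\mathbb E\,\xi_1(1)=\mathbf 0$ and $\mathbb E\,\xi_1^2(1)\ne\mathbf 0$ enter: being centered and nondegenerate, the noise charges a neighborhood of the negative orthant, so one may fix $a\in(0,\delta]$ with $q:=\mathbb P(\xi_{1,k}(1)\le -a,\ \forall k)>0$, whence the $L$-step event has probability at least $p:=q^{nL}>0$ uniformly in $x(0)$. Conditioning successively on $x(0),x(L),x(2L),\dots$ and using the Markov property then gives $\mathbb P(T>mL)\le(1-p)^m$, and since $\mathbb P(T>t)$ is nonincreasing,
\begin{equation*}
  \mathbb E\,T=\sum_{t\ge 0}\mathbb P(T>t)\le L\sum_{m\ge 0}\mathbb P(T>mL)\le L\sum_{m\ge 0}(1-p)^m=\frac{L}{p}<\infty.
\end{equation*}
The condition $\delta\le\epsilon/2$ is not actually used in this bound on the first-hitting time; through Proposition \ref{Prop:quasisynHK} it only serves to make $\{d_{\mathcal V}\le\epsilon\}$ absorbing, so that $T$ coincides with the quasi-synchronization time $T'$.

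I expect the main obstacle to be the \emph{uniformity} of $p$ over all configurations, which is precisely what compactness of $[-1,1]^{n\times d}$ buys: the herding length $L=\lceil 2/a\rceil+1$ cannot degrade as agents start farther apart, since every coordinate lives in an interval of fixed width $2$. The only genuinely delicate point is the positivity $q>0$, i.e. checking that the centered bounded noise charges a neighborhood of the negative orthant \emph{simultaneously in all coordinates}; the second-moment condition has to be read coordinatewise, since leaving any coordinate undriven would let the spread there persist. This reliance on a configuration-independent $L$ is exactly what is unavailable in the unbounded model, where agents may be arbitrarily far apart, and is what forces the different conclusions in the unbounded case.
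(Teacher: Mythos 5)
Your renewal skeleton---a state-independent window $L$ and success probability $p$, followed by geometric trials via the Markov property---matches the paper's, but your herding mechanism contains a genuine gap, located exactly at the point you flag as ``the only genuinely delicate point.'' You need $q=\mathbb{P}\{\xi_{1,k}(1)\le -a,\ \forall k=1,\dots,d\}>0$, and you assert this follows from centeredness and nondegeneracy of the noise. It does not: the hypotheses of the theorem constrain only the marginal behavior of the noise (mean zero, bounded, nondegenerate second moment), and say nothing about the joint law of the $d$ coordinates of a single noise vector, which need not be independent. Concretely, take $d=2$ and $\xi_i(t)=(\eta_i(t),-\eta_i(t))$ with $\eta_i(t)$ uniform on $[-\delta/\sqrt{2},\delta/\sqrt{2}]$: this noise is centered, bounded by $\delta$, and nondegenerate in \emph{every} coordinate, yet its two coordinates can never be simultaneously negative, so $q=0$ for every $a>0$ and your herding event is null. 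Centeredness plus coordinatewise nondegeneracy yields $\mathbb{P}\{\xi_{1,k}(1)\le -a_k\}>0$ for each $k$ \emph{separately}, but simultaneity across coordinates is exactly what crushing all agents into the corner $(-1,\dots,-1)$ requires. (Your side remark that the second-moment condition must be read coordinatewise is correct---if some coordinate were undriven the theorem itself would fail---but even the coordinatewise reading does not rescue the orthant claim.) As written, your proof therefore establishes the theorem only under an additional richness hypothesis, e.g.\ independence of the noise coordinates, or a density bounded below near some point all of whose coordinates are negative.

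For comparison, the paper does not herd to a corner: it notes that the HK averaging map is nonexpansive with respect to the distance to a consensus ball $D$ (the case $\alpha=1$ of (\ref{Condition:f})) and lets the noise steer the state into $D$ step by step (Lemma \ref{Lem:mainbounded} and Corollary \ref{Cor:syninboundspace}); crucially, in the general lemma the required noise richness is an explicit \emph{assumption}, the positive-joint-density condition (\ref{prodensi}), which rules out precisely the degenerate example above. (The paper's Corollary \ref{Cor:syninboundspace} is itself terse at the corresponding spot, replacing (\ref{prodensi}) by ``nondegenerate,'' so the subtlety is not unique to you---but your write-up converts this hidden richness requirement into a false deduction rather than an assumption.) Everything else in your argument is sound and, modulo the fix, more elementary than the paper's: the monotone bound $M_k(t+1)\le\max\{M_k(t)-a,-1\}$ under the herding event, the state-independent $L=\lceil 2/a\rceil+1$ (this uniformity is indeed what the compact cube buys and what fails in the unbounded model), the factorization of the herding probability across agents and time, the geometric bound $\mathbb{P}\{T>mL\}\le(1-p)^m$, and the observation that $\delta\le\epsilon/2$ is only needed to identify $T$ with $T'$ via Proposition \ref{Prop:quasisynHK}. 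One minor point: $a$ must also be taken at most $\delta/\sqrt{d}$ so that the herding event is compatible with $\|\xi_i(t)\|\le\delta$; this is automatic once $q>0$ but should be stated.
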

Theorem \ref{Thm:mainbounded} establishes that in bounded spaces, the HK dynamics achieve quasi-synchronization almost surely (a.s.) within finite time for all dimensions. Moreover, the stopping time is integrable, i.e., $\mathbb{E}\,T<\infty$, highlighting the critical role of boundedness in ensuring predictable and finite-time synchronization.

To prove Theorem \ref{Thm:mainbounded}, we step a little further and consider a general system.

For a given closed convex set $K\subset \mathbb{R}^{n\times d}$, and any $x\in \mathbb{R}^{n\times d}$, denote $P_K(x)=\bar{x}\in K$ with $\| x-\bar{x}\|=\min_{y\in K}\| x-y\|$. Here, $\| \cdot\|$ is the Euclidean norm. If $x\in K$, we have $P_K(x)=x$. Since $K$ is a closed convex set, we know that for any $x\in \mathbb{R}^{n\times d}$, $P_K(x)\in K$ exists. For $z\in\mathbb{R}^{n\times d}$ and $r>0$, denote $$B_z(r)=\{x\in\mathbb{R}^{n\times d}:\| x-z\| \leq r\},$$ and let $B(r)=B_z(r)$ when $z=0$. Consider a system
\begin{equation}\label{Model:randomwalkbound}
  S(t+1)=P_{B(r)}(f(S(t))+\xi(t+1)),
\end{equation}
for $ r>0$, where $f=(f_1,\ldots,f_n)$ with $f_i:\mathbb{R}^{n\times d}\rightarrow \mathbb{R}^d$, and $\xi_i(t)\in \mathbb{R}^d, i\in\mathcal{V}=\{1,\ldots,n\}, t\geq 1$ are a sequence of zero-mean i.i.d. random variables.


We assume the function $f$ in (\ref{Model:randomwalkbound}) satisfies the following condition:
there exists $\alpha>0$ and a nonempty closed convex subset $D\subset \mathbb{R}^{n\times d}$ such that for any $x\in \mathbb{R}^{n\times d}-D$,
\begin{equation}\label{Condition:f}
\| f(x)-P_D(f(x))\|\leq \alpha \| x-P_D(x)\|.
\end{equation}
When $0<\alpha< 1$, $f$ is a transformation which shortens the distance of $x\in\mathbb{R}^{n\times d}$ to $D$.


Also, we assume the noise $\{\xi(t), t\geq 1\}$ in (\ref{Model:randomwalkbound}) has a uniform positive joint probability density, i.e., there exists a constant $\rho>0$ such that
\begin{equation}\label{prodensi}
  \mathbb{P}\{\xi(t)\in B\}\geq \rho \mu(B)
\end{equation}
for any $t>0$ and Lebesgue measurable set $B\subseteq B(r)$,  where $\mu$ is the Lebesgue measure.
The following lemma shows that, the noise-driven system in bounded space can achieve any given region of the state space within an integrable time under some interacting conditions.

\begin{lem}\label{Lem:mainbounded}
Consider the system  (\ref{Model:randomwalkbound}).
Suppose that $\{\xi_i(t), i\in\mathcal{V}, t>0\}$ are i.i.d. random variables, and the inequalities   (\ref{Condition:f}) and (\ref{prodensi}) are satisfied. For any $r_0\in(0,r)$, let $D=B(r_0)$ and
\begin{equation}\label{Equa:defofT}
  T_D=\inf\{t\geq 1:S(t)\in D\},
\end{equation}
then for any initial state $S(0)\in B(r)$, we have
$\mathbb{E}\,T_D<\infty.$
\end{lem}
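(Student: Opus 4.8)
The plan is to reduce the claim to a uniform one-step hitting estimate and then to a geometric tail bound for $T_D$. Throughout I use that the projection forces $S(t)\in B(r)$ for every $t\ge 1$ (and $S(0)\in B(r)$ by hypothesis), so the chain lives in the compact ball $B(r)$. Since the $\xi(t)$ are i.i.d. and $S(t)$ is measurable with respect to $\mathcal{F}_t=\sigma(S(0),\xi(1),\dots,\xi(t))$, the noise $\xi(t+1)$ is independent of $\mathcal{F}_t$, so $\mathbb{P}\{S(t+1)\in D\mid\mathcal{F}_t\}$ equals a deterministic function $g(S(t))$ with $g(s)=\mathbb{P}\{P_{B(r)}(f(s)+\xi(t+1))\in D\}$.

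The core step is to prove that there is a constant $p>0$ with $g(s)\ge p$ for all $s\in B(r)\setminus D$. First I would record the exact equivalence $S(t+1)\in B(r_0)\iff f(s)+\xi(t+1)\in B(r_0)$: if $f(s)+\xi(t+1)\in B(r_0)\subseteq B(r)$ the projection acts as the identity, while if $f(s)+\xi(t+1)\notin B(r)$ its projection lands on the sphere of radius $r>r_0$ and cannot lie in $B(r_0)$. Hence $g(s)=\mathbb{P}\{\xi(t+1)\in B_{-f(s)}(r_0)\}$, which by (\ref{prodensi}) is at least $\rho\,\mu\big(B_{-f(s)}(r_0)\cap B(r)\big)$. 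To bound this measure below uniformly in $s$ I would use (\ref{Condition:f}): for $s\in B(r)\setminus D$ one has $\|s-P_D(s)\|=\|s\|-r_0\le r-r_0$, so $\|f(s)-P_D(f(s))\|\le\alpha(r-r_0)$ and therefore $\|f(s)\|\le r_0+\alpha(r-r_0)=:M$. Thus the center $-f(s)$ ranges over the compact set $\{c:\|c\|\le M\}$, and whenever $M<r+r_0$ (which holds in the non-expansive regime $\alpha\le 1$, and more generally for $\alpha<r/(r-r_0)$) each ball $B_c(r_0)$ meets the interior of $B(r)$; by compactness of the admissible centers and continuity of $c\mapsto\mu(B_c(r_0)\cap B(r))$, the infimum is attained and strictly positive. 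In the case $\alpha\le 1$ one can make this explicit by fitting a fixed radius-$r_0/2$ ball inside $B_{-f(s)}(r_0)\cap B(r)$, yielding $p:=\rho\,\mu(B(r_0/2))>0$.

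With the one-step bound in hand I would close by a standard recursion. On the event $\{T_D>t\}$ we have $S(t)\in B(r)\setminus D$, so $\mathbb{P}\{T_D>t+1\mid\mathcal{F}_t\}\le 1-p$ there; iterating from $t=1$ gives $\mathbb{P}\{T_D>k\}\le(1-p)^{k-1}$ for $k\ge 1$, uniformly in $S(0)\in B(r)$. Summing the tail, $\mathbb{E}\,T_D=\sum_{k\ge 0}\mathbb{P}\{T_D>k\}\le 1+\sum_{k\ge 1}(1-p)^{k-1}=1+\tfrac1p<\infty$. Starting the recursion at $t=1$ conveniently avoids having to control $f$ on $D$ itself, since (\ref{Equa:defofT}) only counts times $t\ge 1$.

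The main obstacle is the geometric estimate of the middle paragraph: showing that the noise-reachable ball $B_{-f(s)}(r_0)$ retains a uniformly positive overlap with $B(r)$. This is precisely where the boundedness of the state space (the projection keeping $S(t)$ inside $B(r)$) cooperates with the uniform positive density (\ref{prodensi}) of the noise, and it is where the size of $\alpha$ enters: if $f$ could push states to $\|f(s)\|\ge r+r_0$, the reachable ball could fall entirely outside $B(r)$, where (\ref{prodensi}) provides no lower bound, so a control such as $\alpha\le 1$ (automatic for the averaging map underlying the HK dynamics) is what makes the uniform constant $p>0$ genuine.
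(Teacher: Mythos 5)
Your proposal takes a genuinely different route from the paper's. The paper never attempts a one-step hitting estimate: it fixes $\lambda\in(0,1/\alpha)$ and conditions the noise, step after step, to lie in balls centered at $O_t^f=O_{t-1}-\hat{S}(t-1)$ of radius $\lambda\|\hat{S}(t-1)-O_{t-1}\|$, so that the distance from $S(t)$ to $D$ contracts like $(\lambda\alpha)^t$; after $L=\lceil\log_{\lambda\alpha}\frac{\phi}{2\alpha r}\rceil+1$ such steps a final noise event of size $\phi$ lands $S(L)$ in $D$, giving $\mathbb{P}\{S(L)\in D\}\geq p^L$ uniformly over $B(r)$ and then a block-geometric tail $\mathbb{P}\{T_D\geq mL\}\leq(1-p^L)^m$. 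You instead prove a uniform one-step minorization $g(s)\geq p$ on $B(r)\setminus D$ and sum a plain geometric tail. Within its range of validity your argument is sound: the equivalence $S(t+1)\in B(r_0)\Leftrightarrow f(S(t))+\xi(t+1)\in B(r_0)$, the bound $\|f(s)\|\leq r_0+\alpha(r-r_0)$, the fitted ball of radius $r_0/2$ when $\alpha\leq 1$, and starting the recursion at $t=1$ to avoid controlling $f$ on $D$ are all correct, and you get the cleaner bound $\mathbb{E}\,T_D\leq 1+1/p$.

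The gap is one of coverage: your one-step estimate exists only when $r_0+\alpha(r-r_0)<r+r_0$, i.e. $\alpha<r/(r-r_0)$, whereas Lemma \ref{Lem:mainbounded} assumes only $\alpha>0$, and the remark following it makes clear that arbitrarily expansive $f$ ($\alpha>1$) is the intended content. Flagging this restriction, as you do in your last paragraph, does not close it; what you have proved is essentially the non-expansive regime of Corollary \ref{Cor:syninboundspace}, which suffices for Theorem \ref{Thm:mainbounded} but is strictly weaker than the lemma. That said, the obstruction you isolate is real and the paper does not actually circumvent it: the bound (\ref{Equa:probnoise1}) tacitly assumes the conditioning balls meet $B(r)$, the only region on which (\ref{prodensi}) gives any lower bound, yet their centers $O_t^f$ have norm of order $\alpha(r-r_0)$ while their radii are below $\|O_t^f\|/\alpha$, so for $\alpha>1+r/(r-r_0)$ these balls miss $B(r)$ entirely and $p>0$ is unjustified. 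Indeed, taking $f(x)=P_D(x)+\alpha(x-P_D(x))$ (radial expansion, which satisfies (\ref{Condition:f}) with equality) and $\xi_i(t)$ i.i.d. uniform on the radius-$r$ ball of $\mathbb{R}^d$, one checks that for $\alpha$ large enough, from $\|S(0)\|=r$ the point $f(S(t))+\xi(t+1)$ always has norm exceeding $r$, so the projection pins $S(t)$ to the sphere $\|S(t)\|=r$ forever and $T_D=\infty$ a.s. So the lemma in its stated generality is itself problematic, and its failure point is exactly the one your argument runs into; a restriction of roughly the type you impose (or a density lower bound on a region larger than $B(r)$) appears to be necessary, not an artifact of your method.
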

\begin{proof}
Clearly, if $S(0)\in D$,
\begin{equation}\label{Equa:Tequ1}
  \mathbb{P}\{T_D=1\}=1.
\end{equation}
Now we consider the case when $S(0)\in B(r)\setminus D$. Denote $O_t=P_D({S(t)})$, $\hat{S}(t)=f(S(t))$, $t\geq 0$, then when
\begin{equation}\label{Cond:xi1}
  \xi(1)\in B_{P_D(S(0))}(\lambda \|\hat{S}(0)-O_0\|)-\hat{S}(0)
\end{equation}
with $0<\lambda< \frac{1}{\alpha}$, by the definition of $P_D(\cdot)$, (\ref{Condition:f}) and (\ref{Cond:xi1}),
\begin{equation}\label{Equa:disderea}
\begin{split}
  \| S(1)-O_1\|=&\| S(1)-P_D(S(1))\|\leq \| S(1)-P_D(S(0))\|\\
  =&\| f(S(0))+\xi(1)-P_D(S(0))\|\leq \lambda\| f(S(0))-O_0\|\\
  \leq& \lambda\alpha\| S(0)-O_0\|.
\end{split}
\end{equation}
If $\lambda\alpha\| S(0)-O_0\|\leq r_0$, we know $S(1)\in D$ under (\ref{Cond:xi1}). Otherwise, repeating the above process and taking
\begin{equation*}
  \xi(t)\in B_{P_D(S(t-1))}(\lambda \| \hat{S}(t-1)-O_{t-1}\|)-\hat{S}(t-1),
\end{equation*}
we obtain
\begin{equation*}
\begin{split}
  \| S(t)-P_D(S(t))\|\leq&\| S(t)-P_D(S(t-1))\|\\
  =&\| f(S(t-1))+\xi(t)-P_D(S(t-1))\|\\
  \leq &\lambda \| f(S(t-1))-O_{t-1}\|\\
  \leq &\lambda\alpha\| S(t-1)-O_{t-1}\|\\
  \leq& (\lambda\alpha)^t\| S(0)-O_0\|,\quad t\geq 1.
\end{split}
\end{equation*}
Take a sufficiently small $\phi\in(0, \frac{r_0}{2}]$ and denote $L=\lceil\log_{\lambda\alpha} \frac{\phi}{2\alpha r}\rceil+1$.
Since $D$ is a closed convex set, by the definition of $S_D(f(S(L-1)))$, we have
\begin{equation}\label{Equa:distdelta}
\begin{split}
  \| f(S(L-1))-P_D(f(S(L-1)))\|\leq&\alpha\| S(L-1)-O_{L-1}\|\leq \alpha(\lambda\alpha)^{L-1}\| S(0)-O_0\|\\
  \leq&\phi.
  \end{split}
\end{equation}
Consider the set
\begin{equation*}
  B_L=-B_{S_D(f(S(L-1)))}(\phi),
\end{equation*}
then when $\xi(L)\in B_L$, from (\ref{Equa:distdelta}) we have
\begin{equation}\label{Equa:SLinD}
\begin{split}
 \| S(L)\|=\|f(S(L-1))+\xi(L)\|\leq& \|f(S(L-1))-S_D(f(S(L-1)))\|+\|S_D(f(S(L-1)))+\xi(L)\|\\
  \leq&\phi+\phi=2\phi\leq r_0.
\end{split}
\end{equation}
Then $S(L)\in D$.
Denote $O_t^f=O_{t-1}-\hat{S}(t-1)$,
by the independence of $\xi(t), t\geq 1$ and (\ref{prodensi}), there exists $p>0$ such that for $t\leq L$,
\begin{equation}\label{Equa:probnoise1}
  \mathbb{P}\Big\{\xi(t)\in B_{O_t^f}(\lambda \|\hat{S}(t-1)-O_{t-1}\|)\Big\}\geq p,
\end{equation}
and also
\begin{equation}\label{Equa:probnoise2}
\mathbb{P}\{\xi(L)\in B_L\}\geq p,
\end{equation}
then by (\ref{Equa:Tequ1}), (\ref{Equa:SLinD}) and the independence of $\xi(t), t\geq 1$, we have for any $S(0)\in B_o(r)$,
\begin{equation}\label{Equa:probofreach}
\begin{split}
  \mathbb{P}\{S(L)\in D\}\geq&\prod\limits_{t<L}\mathbb{P}\Big\{\xi(t)\in B_{O_t^f}(\lambda \|\hat{S}(t-1)-O_{t-1}\|)\Big\}\mathbb{P}\{\xi(L)\in B_L\}\\
  \geq& p^{L}>0.
  \end{split}
\end{equation}
Hence by (\ref{Equa:probofreach}) and Morkov property, we have
\begin{equation}\label{Equa:probTmL}
\begin{split}
  \mathbb{P}\{T_D\geq mL\}=& \mathbb{P}\{S(t)\notin D, t< mL\}\\
\leq&\prod\limits_{k=1}^{m}\mathbb{P}\{S(kL+1)\notin D|S((k-1)L)\notin D\}\\
  \leq& (1-p^{L})^m, \quad m\geq 1.
  \end{split}
\end{equation}
Then by (\ref{Equa:probTmL}), we get
\begin{equation*}
\begin{split}
  \mathbb{E}\,T_D=&\sum\limits_{t=1}^\infty\mathbb{P}\{T_D\geq t\}\\
  =&\sum\limits_{m=0}^\infty\bigg(\mathbb{P}\{T_D\geq mL\}+\sum\limits_{t=mL+1}^{mL+L}\mathbb{P}\{T_D\geq t\}\bigg)\\
  \leq&(L+1)\sum\limits_{m=0}^\infty\mathbb{P}\{T_D\geq mL\}\\
  \leq &(L+1)\sum\limits_{m=0}^\infty(1-p^L)^m\\
  =&\frac{L+1}{p^{L}}<\infty.
  \end{split}
\end{equation*}
This completes the proof.
\end{proof}

\begin{rem}
In the proof, the assumption allows $\alpha>1$ which means $f$ can be an expansive mapping. The lemma shows that even in this case, noise satisfying some conditions (here is condition (\ref{prodensi})) can still control the systems into any smaller region in the space. Since $\lambda<\frac{1}{\alpha}$, larger $\alpha$ results in smaller $\lambda$, which means smaller region of noise by (\ref{Cond:xi1}). Furthermore, if $\alpha=1$, a simpler noise condition can be use to obtain the conclusion of Lemma \ref{Lem:mainbounded}.
\end{rem}
\begin{cor}\label{Cor:syninboundspace}
Suppose $\alpha=1$ in (\ref{Condition:f}), $\{\xi(t), t\geq 1\}$ are nondegenerate i.i.d. random variables with $\mathbb{E}\,\xi(1)=0$ and $\|\xi_i(t)\|\leq \frac{r_0}{2}$ a.s. Then for any initial state $S(0)\in B(r)$ of the system (\ref{Model:randomwalkbound}),
\begin{equation*}
 \mathbb{E}\,T_D<\infty, ~\text{for all}~ d\geq 1.
\end{equation*}
\end{cor}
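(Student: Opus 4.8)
The plan is to follow the same two‑layer skeleton as the proof of Lemma \ref{Lem:mainbounded}: first establish a uniform ``reach‑$D$‑in‑a‑bounded‑number‑of‑steps'' estimate, namely that there exist a deterministic $N\in\mathbb{N}$ and $q\in(0,1)$ with $\inf_{S(0)\in B(r)}\mathbb{P}\{S(t)\in D \text{ for some } t\le N\}\ge q$, and then recycle verbatim the Markov‑property/geometric‑tail computation from (\ref{Equa:probTmL}) onward, which turns such a uniform estimate into $\mathbb{P}\{T_D\ge mN\}\le(1-q)^m$ and hence $\mathbb{E}\,T_D<\infty$. The whole novelty relative to Lemma \ref{Lem:mainbounded} is how to produce this uniform estimate using only nondegeneracy of $\xi$ instead of the density lower bound (\ref{prodensi}). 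The key observation is that with $\alpha=1$ we no longer need the noise to hit prescribed small balls (the role of (\ref{prodensi}) in the Lemma's ``tube'' argument); it suffices that the noise provide a one‑sided, inward drift, and this is exactly what a nondegenerate, zero‑mean, bounded law delivers.

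Concretely, I would first record two monotonicity facts coming from $\alpha=1$ and the choice $D=B(r_0)$. Since the distance to a ball equals $(\|x\|-r_0)_+$, condition (\ref{Condition:f}) with $\alpha=1$ reads $(\|f(x)\|-r_0)_+\le(\|x\|-r_0)_+$, which forces $\|f(x)\|\le\|x\|$ whenever $\|x\|>r_0$; moreover $\|P_{B(r)}(y)\|\le\|y\|$. Thus neither $f$ nor the projection ever increases the norm, and the only mechanism that can move $S(t)$ outward is the noise. Next I would prove the central uniform inward‑drift estimate: there exist $\beta>0$ and $p_0>0$ such that $\mathbb{P}\{\langle\xi,u\rangle\le-\beta\}\ge p_0$ for every unit vector $u$. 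For fixed $u$ this is immediate from $\mathbb{E}\langle\xi,u\rangle=0$ together with nondegeneracy (so $\langle\xi,u\rangle$ is a nonconstant mean‑zero variable and takes negative values with positive probability); uniformity over the unit sphere follows by compactness and continuity of $u\mapsto\mathbb{E}[(\langle\xi,u\rangle)_-]$, after which a one‑line Markov‑inequality argument upgrades a uniform lower bound on $\mathbb{E}[(\langle\xi,u\rangle)_-]$ to the stated lower bound on $\mathbb{P}\{\langle\xi,u\rangle\le-\beta\}$.

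With these in hand, the per‑step contraction runs as follows. Writing $\hat S(t)=f(S(t))$, $\ell=\|\hat S(t)\|\le\|S(t)\|$, and $u=\hat S(t)/\ell$, conditioning on the inward event gives $\|S(t+1)\|^2\le\|\hat S(t)+\xi\|^2\le \ell^2-2\beta\ell+\|\xi\|^2$. When $\ell\le r_0/2$ the noise bound $\|\xi\|\le r_0/2$ already forces $S(t+1)\in D$; when $\ell$ is not too close to $r_0$ the radial term $-2\beta\ell$ dominates the transverse contribution and $\|S(t+1)\|\le\|S(t)\|-c$ for a fixed $c>0$. Iterating the inward event $N\approx\lceil(r-r_0)/c\rceil$ times and using independence of the $\xi(t)$ then yields $\mathbb{P}\{S \text{ enters } D \text{ within } N \text{ steps}\}\ge p_0^{\,N}=:q>0$, uniformly in $S(0)\in B(r)$, which closes the skeleton.

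The step I expect to be the main obstacle is precisely the near‑boundary shell $\|S(t)\|\gtrsim r_0$: there the transverse spread of the noise, bounded only by $\|\xi\|\le r_0/2$ in the estimate above, can cancel the radial inward push $-2\beta\ell$, so a single inward step need neither decrease the norm nor land in $D$ (this genuinely happens for nondegenerate laws whose inward mass sits at comparatively large magnitude). I would try to handle it by refining the good event to also cap the noise magnitude, replacing $\{\langle\xi,u\rangle\le-\beta\}$ by $\{\langle\xi,u\rangle\le-\beta\}\cap\{\|\xi\|\le\delta_1\}$ with $\delta_1^2<2\beta r_0$ chosen so that the intersection still has uniformly positive probability; here the hypothesis $\|\xi\|\le r_0/2$ is exactly what gives the needed room, since it bounds the transverse spread relative to $r_0$. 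If a single refined event does not suffice for every nondegenerate law, the fallback is a short multi‑step maneuver exploiting nondegeneracy in all directions (first displace transversally, then push in), i.e.\ an irreducibility/recurrence argument for the $B(r)$‑reflected dynamics; establishing the uniform lower bound for that maneuver is the delicate quantitative point of the whole proof.
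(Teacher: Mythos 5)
You follow essentially the same skeleton as the paper's proof: per-step ``good noise events'' with a uniform probability lower bound, iterated a bounded number of times, and then the Markov-property geometric-tail computation of (\ref{Equa:probTmL}). The only structural difference is the choice of good event: yours are half-space events $\{\langle\xi,u\rangle\le-\beta\}$ producing an \emph{additive} decrease of $\|S(t)\|$, whereas the paper reuses the events (\ref{Equa:probnoise1})--(\ref{Equa:probnoise2}) of Lemma \ref{Lem:mainbounded}, i.e.\ a \emph{multiplicative} contraction of the distance to $D$ plus a final landing step. Either way, the corollary is reduced to one quantitative claim: the good events have probability bounded below uniformly over the relevant configurations, in particular near the boundary of $D$.

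That claim is precisely what you leave open, so as it stands the proposal is incomplete; but two remarks put the gap in context. First, the paper does not prove it either: its proof consists of noting that $\alpha=1$ permits any $\lambda\in(0,1)$, asserting that nondegeneracy makes (\ref{Equa:probnoise1}) and (\ref{Equa:probnoise2}) hold for some $p>0$, and copying Lemma \ref{Lem:mainbounded}. Second, your suspicion that this step is genuinely delicate is correct. Under the corollary's hypothesis $\|\xi_i(t)\|\le r_0/2$ the literal copy actually breaks: the event (\ref{Equa:probnoise2}) forces $\|\xi(L)\|\ge\|P_D(f(S(L-1)))\|-\phi\ge r_0-\phi\ge r_0/2$, hence is at best a null event. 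Moreover, no single-step construction can work for all admissible laws: take, in the relevant two-dimensional plane, an atom along $+u$ balanced by two atoms of norm $0.4r_0$ at angle $100^{\circ}$ from $u$; this law is nondegenerate, mean-zero, bounded by $r_0/2$, yet every realization satisfies $2r_0\langle\xi,u\rangle+\|\xi\|^2>0$, so $\|r_0u+\xi\|^2>r_0^2$ and no single noise kick---yours or the paper's---moves a state at or just outside the boundary point $r_0u$ into $D$. Two successive oblique atoms do enter $D$, so your multi-step ``fallback'' is not a fallback but the necessary argument, and establishing its uniform lower bound is the real content of a correct proof. Finally, your uniform drift estimate $\inf_u\mathbb{P}\{\langle\xi,u\rangle\le-\beta\}\ge p_0$ requires reading ``nondegenerate'' as ``support not contained in a proper subspace''; under the weaker reading (merely non-constant) both the estimate and the corollary's conclusion fail (take $f$ the identity, noise supported in a coordinate hyperplane, and $S(0)$ far from $D$ in the orthogonal direction). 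In short: your approach is the paper's approach, your unresolved step is exactly the step the paper dismisses as easy, and that step is genuinely nontrivial---a completed version of your multi-step maneuver would in fact be more rigorous than the published proof.
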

\begin{proof}
When $\alpha=1$,
\begin{equation*}
  \| S(1)-O_1\|\leq \lambda\| S(0)-O_0\|
\end{equation*}
holds for any $0<\lambda<1$ (refer to (\ref{Equa:disderea})). In other words, the distance of $S(t)$ to $D$ is nonincreasing with $t$ even without noise. Since $\xi_i(t), i\in\mathcal{V}, t\geq 1$ are nondegenerate i.i.d. random variables, it is easy to show that there exists $0<p<1$ such that (\ref{Equa:probnoise1}) and (\ref{Equa:probnoise2}) hold. The rest of the proof completely copies the proof of Lemma \ref{Lem:mainbounded}.
\end{proof}

\emph{Proof of Theorem \ref{Thm:mainbounded}:}
Rewrite the bounded system (\ref{Model:HKmodelbound}) as
\begin{equation}\label{Model:HKboundrewrite}
  x(t+1)=P_{B(\epsilon/2)}(F(x(t))+\xi(t+1)).
\end{equation}
Let $D=B(\epsilon/2)$, by equation (\ref{Model:HKmodelbound}), we have
\begin{equation*}
  \|F(x(t))-P_D(F(x(t)))\|\leq \|x(t)-P_D(x(t))\|.
\end{equation*}
Then the conclusion follows from Corollary \ref{Cor:syninboundspace}.\hfill $\Box$

\subsection{Case of unbounded space}\label{Subsec:unboundedspace}
The study of unbounded space is of significant importance, both theoretically and practically. On the one hand, as a fundamental mechanism of self-organization, investigating the behavior of HK dynamics in unbounded space helps to theoretically elucidate the inherent capabilities and properties of this mechanism in the absence of additional constraints, thereby holding substantial scientific value. On the other hand, research on unbounded scenarios provides insights into the behavior of systems when the spatial scale is extremely large. Furthermore, as demonstrated in our study of the one-dimensional case, analyzing unbounded systems imposes higher mathematical demands compared to bounded systems, offering a more rigorous framework for understanding complex dynamics.

In this part, we will study the random stopping time $T$ for the system in unbounded space, where we observe that the properties of $T$ differ significantly from those in bounded space. Specifically, in unbounded space, the system exhibits unique phenomena such as non-integrable stopping times and probabilistic synchronization, which are not observed in bounded settings. These findings not only deepen our understanding of HK dynamics but also highlight the critical role of spatial boundedness in shaping the emergence of self-organizing order.

\begin{thm}\label{Thm:mainunbounded}
Suppose $\{\xi_i(t), i\in\mathcal{S}, t>0\}$ are i.i.d. with $\mathbb{E}\,\xi_1(t)=\textbf{0}$, $\mathbb{E}\, \xi_1^2(1)\neq\textbf{0}$ and $\delta\leq\epsilon/2$, then for the unbounded system (\ref{Model:neighbor}) and (\ref{Model:HKmodelunbound}), we have for $d=1,2$,
\begin{equation*}
 (a)\begin{split}
 &\mathbb{P}\{T<\infty\}=1, ~\text{for all}~x(0)\in (-\infty, \infty)^{n\times d}\\
 &\mathbb{E}\,T=\infty,~\text{for some}~x(0)\in (-\infty, \infty)^{n\times d}.
 \end{split}
\end{equation*}
Furthermore, when $\{\xi_i(t), i\in\mathcal{V}, t\geq 1\}$ are symmetric, i.e., $\xi_i(t)$ and $-\xi_i(t)$ have a same distribution, then for $d\geq 3$ and some initial states $x(0)\in (-\infty, \infty)^{n\times d}$, we have
\begin{equation*}
 (b)\begin{split}
 &\mathbb{P}\{T<\infty\}<1, \\
 &\mathbb{E}\,T=\infty.
\end{split}
\end{equation*}
\end{thm}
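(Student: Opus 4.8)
The plan is to establish the three qualitatively different regimes ($d=1,2$ versus $d\geq 3$) by reducing the question of quasi-synchronization to a recurrence/transience dichotomy for a random-walk-type process, exploiting the classical P\'olya-type fact that symmetric random walks are recurrent in dimensions one and two but transient in dimension three and higher. First I would analyze the dynamics of the common center of mass and the relative configuration. Writing $\bar{x}(t)=\frac{1}{n}\sum_{i}x_i(t)$, note that once the system is already quasi-synchronized (all agents mutual neighbors, i.e.\ $d_{\mathcal{V}}(t)\leq\epsilon$), the averaging map collapses every agent to the same deterministic point $\bar{x}(t)$ before noise is added, so the increments of $\bar{x}$ become i.i.d.\ mean-zero bounded random vectors in $\mathbb{R}^d$ — precisely a bounded-increment random walk. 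The event $\{T<\infty\}$ is then governed by whether this walk, together with the contractive averaging dynamics, ever forces the diameter below $\epsilon$.

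For part (a), the strategy for $\mathbb{P}\{T<\infty\}=1$ is to show that the relative positions $x_i(t)-\bar{x}(t)$ do not run away: the averaging step is a contraction on the relative configuration (the map $F$ satisfies the $\alpha=1$ nonexpansiveness used in the bounded case, cf.\ (\ref{Condition:f})), so heuristically the spread is controlled while the recurrence of the $d=1,2$ random walk guarantees that a favorable noise configuration bringing all agents within $\epsilon$ occurs infinitely often, hence almost surely. More carefully I would set up a suitable one-dimensional comparison (e.g.\ track the diameter $d_{\mathcal{V}}(t)$ or a projection) and invoke a recurrence argument: in $d=1,2$ the walk returns to bounded regions infinitely often, and on each such return there is a uniformly positive conditional probability (from the positive-density-type noise assumption) that the next few steps synchronize the system, so a Borel--Cantelli / second-moment argument yields $\mathbb{P}\{T<\infty\}=1$. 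For the non-integrability $\mathbb{E}\,T=\infty$ with some initial data, I would place the agents far apart and use the fact that the return time of a one- or two-dimensional bounded-increment walk to a bounded set has infinite expectation (the tail $\mathbb{P}\{T\geq t\}$ decays only polynomially, like $t^{-1/2}$ in $d=1$), so summing the tail diverges.

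For part (b) with $d\geq 3$ and symmetric noise, the strategy reverses: I would show that if the agents start sufficiently far apart, the center-of-mass-type walk is transient, so with positive probability it escapes to infinity without the configuration ever contracting enough to achieve $d_{\mathcal{V}}(t)\leq\epsilon$. Symmetry of the noise is what licenses the comparison to a genuinely symmetric transient random walk and rules out a systematic drift that could force synchronization. Concretely I would construct an event of positive probability on which some pair of agents (or two well-separated clusters that never become mutual neighbors) keep their separation above $\epsilon$ for all time, using transience to bound the probability of return to the synchronized region strictly below $1$; this simultaneously gives $\mathbb{P}\{T<\infty\}<1$ and, since $T=\infty$ on a positive-probability event, $\mathbb{E}\,T=\infty$ trivially.

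The main obstacle I expect is making the reduction to a clean random walk rigorous despite the state-dependent neighbor structure: the set $\mathcal{N}_i(t)$ changes as agents move, so the dynamics are only piecewise-linear and the ``walk'' is not exactly a sum of i.i.d.\ increments until synchronization has nearly occurred. The delicate part is controlling the interplay between the contractive averaging (which tends to pull agents together) and the diffusive spreading from noise, and in particular proving that in $d\geq 3$ the contraction cannot overcome transience for suitably chosen far-apart initial data. I would handle this by identifying a dominating or comparison process — for instance coupling the true diameter to a one-dimensional process in the recurrent case, and in the transient case isolating two agents whose difference $x_i(t)-x_j(t)$, while the two are non-neighbors, evolves as an exact bounded-increment symmetric random walk in $\mathbb{R}^d$ — and then transferring the classical recurrence/transience conclusions through these comparisons.
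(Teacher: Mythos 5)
Your overall architecture matches the paper's: place two well-separated clusters so that, before any synchronization, the difference of their averages evolves as a bounded-increment mean-zero random walk, then transfer the classical recurrence/transience dichotomy (recurrent for $d=1,2$, transient for $d\geq 3$) through this comparison. Your part (a) is essentially sound and close to the paper's route: the paper gets $\mathbb{P}\{T<\infty\}=1$ by citing its earlier one-dimensional results and the recurrence of planar walks, and gets $\mathbb{E}\,T=\infty$ from the classical fact that first-passage times of mean-zero one-dimensional walks are non-integrable, plus a lemma (Lemma \ref{Lem:Tgeneralrw}) handling the wrinkle that the cluster-difference process is the walk $Z(t-1)$ perturbed by the bounded term $\xi_i(t)-\xi_j(t)$, and a minimum over pairs; for $d=2$ it does not invoke two-dimensional return times directly but projects onto a coordinate in which the initial separation exceeds $\epsilon$ and reuses the $d=1$ result. (Two minor inaccuracies in your sketch: the theorem does not grant a positive-density noise assumption, only nondegeneracy, and your opening center-of-mass framing is beside the point — the relevant walk is the inter-cluster difference, as you later recognize.)

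The genuine gap is in part (b). You claim you can "use transience to bound the probability of return to the synchronized region strictly below $1$," as though this were an immediate consequence of transience. It is not: transience says the walk a.s.\ leaves every bounded set forever, which is perfectly consistent with the walk hitting one prescribed ball $B_{O_1}(\epsilon)$ with probability $1$ (hit it once, then escape to infinity). Proving the strict inequality $\mathbb{P}\{T_1<\infty\}<1$ for the hitting time of that ball is the actual content of the paper's $d\geq 3$ argument, and it is exactly where the symmetry hypothesis enters — not, as you suggest, to "rule out a systematic drift" (the mean-zero assumption already does that). The paper argues by contradiction: if the hitting probability were $1$, then by the strong Markov property and the i.i.d.\ increments the walk would with probability $1$ execute a further displacement by approximately $O_1$, and by symmetry a displacement by approximately $-O_1$ with the same probability; alternating these targets produces an infinite sequence of random times at which $\|Z(\cdot)\|\leq 2\|O_1\|$, i.e., the walk visits a fixed bounded set infinitely often almost surely, contradicting $\mathbb{P}\{\|Z(t)\|\leq M,\ \mathrm{i.o.}\}=0$, which follows from transience together with the Hewitt--Savage 0--1 law. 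Without this iteration (or some quantitative Green-function estimate, which for general non-lattice bounded-increment walks would itself require justification), your part (b) does not close; once that step is supplied, your conclusion $\mathbb{P}\{T<\infty\}<1$ and the trivial consequence $\mathbb{E}\,T=\infty$ follow as in the paper.
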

\begin{rem}
A stopping time $T$ is said to be integrable if $\mathbb{E}\,T<\infty$. From Proposition \ref{Prop:quasisynHK}, there exist certain initial states-for example, where all individuals are mutual neighbors-from which the systems can achieve quasi-synchronization immediately. Consequently, $\mathbb{E}\,T<\infty$ in (a) and $\mathbb{P}\{T<\infty\}=1$ in (b). However, here we consider arbitrary initial states, and our conclusions imply that there exist initial configurations for which $\mathbb{E}\,T=\infty$ in (a) and $\mathbb{P}\{T<\infty\}<1$ in (b). In other words, for one- and two-dimensional unbounded spaces, the system (\ref{Model:HKmodelunbound}) can almost surely achieve quasi-synchronization in finite time from any initial state, even though the stopping time is not integrable. In contrast, for higher-dimensional spaces, the system can no longer achieve quasi-synchronization almost surely in finite time, except for some very specific initial states.
Moreover, the non-integrability of the stopping time in (a) indicates that, compared to bounded spaces where the stopping time is integrable, the time required to achieve quasi-synchronization in low-dimensional unbounded spaces becomes essentially longer, even though it remains finite.
\end{rem}
The proof of Theorem \ref{Thm:mainunbounded} relies heavily on analyzing the properties of some random walks. First we give some lemmas about one-dimensional random walks, which will play an important role in proving the conclusion (a) of Theorem \ref{Thm:mainunbounded}.
\begin{lem}\cite{Chow1997}\label{Lem:Tboundedwalk}
Suppose $\{\xi(t), t\geq 1\}$ are one-dimensional nondegenerate i.i.d. random variables with $\mathbb{E}\,\xi(1)=0$, let
\begin{equation*}
  U(t)=\sum_{j=1}^t\xi(j),\,\text{and}\,\,\, T_U=\inf\{t\geq 1: U(t)\leq b\}
\end{equation*}
for $b\leq 0$, then $\mathbb{E}\,T_U=\infty$.
\end{lem}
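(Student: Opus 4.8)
The plan is to argue by contradiction using Wald's first identity (equivalently, the optional stopping theorem for the mean-zero random walk, which is a martingale). Suppose, contrary to the claim, that $\mathbb{E}\,T_U<\infty$. Since $\mathbb{E}\,\xi(1)=0$ is assumed, the increments are automatically integrable, so Wald's identity applies to the stopping time $T_U$ and yields
\begin{equation*}
  \mathbb{E}\,U(T_U)=\mathbb{E}\,T_U\cdot\mathbb{E}\,\xi(1)=0.
\end{equation*}
The whole proof then hinges on showing that this equality is incompatible with the defining property $U(T_U)\le b\le 0$ of the stopping time, once nondegeneracy is taken into account.

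First I would dispose of the strict case $b<0$: by definition $U(T_U)\le b<0$ on $\{T_U<\infty\}$, and the assumption $\mathbb{E}\,T_U<\infty$ already forces $T_U<\infty$ a.s., so $\mathbb{E}\,U(T_U)\le b<0$, directly contradicting the Wald value $0$. For the boundary case $b=0$ I would use that the walk overshoots (or at least can land strictly below $0$): since $\xi(1)$ is nondegenerate with mean zero, it cannot be a.s. nonnegative, hence $\mathbb{P}\{\xi(1)<0\}>0$. On the event $\{\xi(1)<0\}$ we have $U(1)=\xi(1)\le 0$, so $T_U=1$ and $U(T_U)=\xi(1)<0$; therefore $\mathbb{P}\{U(T_U)<0\}>0$. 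Combined with $U(T_U)\le 0$ a.s., this gives $\mathbb{E}\,U(T_U)<0$, again contradicting $\mathbb{E}\,U(T_U)=0$. In either case the assumption $\mathbb{E}\,T_U<\infty$ fails, so $\mathbb{E}\,T_U=\infty$.

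The step I expect to be the main obstacle is the rigorous justification of Wald's identity in this setting, since naive optional stopping for a martingale requires a uniform integrability or domination hypothesis that is not obvious here. The clean way around this is to invoke Wald's first identity in the form stated for i.i.d. integrable increments and a stopping time of finite expectation, which is exactly the hypothesis being assumed for contradiction; this is the content of the cited reference \cite{Chow1997}. A secondary delicate point is the $b=0$ boundary, where one must genuinely exploit nondegeneracy rather than mere recurrence, since for a degenerate walk the statement would be false; the argument above isolates this by exhibiting a positive-probability event on which $U(T_U)$ is strictly negative. No fine random-walk estimates (local CLT, ladder-epoch asymptotics) are needed, which keeps the proof short, though an alternative route via divergence of $\sum_t\mathbb{P}\{T_U\ge t\}$ and hitting-probability lower bounds of order $t^{-1/2}$ would also work if one preferred to avoid Wald.
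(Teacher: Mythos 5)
Your proposal is correct, and there is in fact nothing in the paper to compare it against: the paper states this lemma as a quoted result from the reference (Chow and Teicher) and gives no proof of its own. Your argument is the standard textbook proof of this classical fact, and all the delicate points are handled properly: Wald's first identity applies exactly under the hypotheses you assume for contradiction (i.i.d.\ integrable increments, which is implicit in $\mathbb{E}\,\xi(1)=0$, plus $\mathbb{E}\,T_U<\infty$), yielding $\mathbb{E}\,U(T_U)=\mathbb{E}\,T_U\cdot\mathbb{E}\,\xi(1)=0$; the case $b<0$ is immediate since $U(T_U)\le b<0$ a.s.; and the boundary case $b=0$ is where nondegeneracy genuinely enters, which you exploit correctly by noting $\mathbb{P}\{\xi(1)<0\}>0$, so that $U(T_U)\le 0$ a.s.\ together with $\mathbb{P}\{U(T_U)<0\}>0$ forces $\mathbb{E}\,U(T_U)<0$, contradicting the Wald value. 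This is essentially the argument in the cited reference, so your proof fills the citation with the intended content rather than taking a genuinely different route.
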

Lemma \ref{Lem:Tboundedwalk} shows that for a random walk of one-dimension, its stopping time of arriving any point is non-integrable. Moreover, we can prove that when the random walk is stretched via some functions, the same conclusion holds.
\begin{lem}\label{Lem:Tgeneralrw}
Let $U(t)=\sum_{i=1}^{t}\xi(i)$, $S(1)=\xi(1), S(t+1)=g(U(t))+h(\xi(t+1)), t\geq 1$, where $g, h$ satisfy $|g(x)|\geq \beta|x|$ with some $\beta>0$, $|h(x)|\leq M<\infty$ for some $M>0$, and $g(0)=h(0)=0$, $g(x)x>0, h(x)x> 0$ for $x\neq 0$. Denote $T_1=\inf\{t\geq 1: S(t)\leq 0\}$, then $\mathbb{E}\,T_1=\infty$.
\end{lem}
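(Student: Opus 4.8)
The plan is to bound $T_1$ from below by a first-passage time of the underlying walk $U$ that is controlled by Lemma \ref{Lem:Tboundedwalk}. The key implication I would extract from the hypotheses is the following: for $x>0$ the sign condition $g(x)x>0$ forces $g(x)=|g(x)|\geq\beta|x|=\beta x$, so whenever $U(t-1)>M/\beta$ we get $g(U(t-1))\geq\beta U(t-1)>M$, and since $|h|\leq M$ gives $h(\xi(t))\geq -M$, it follows that $S(t)=g(U(t-1))+h(\xi(t))>M-M=0$. In other words, $S$ cannot drop to $0$ while the walk $U$ stays strictly above the positive level $M/\beta$; equivalently $\{S(t)\leq 0\}$ forces $U(t-1)\leq M/\beta$. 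Thus $T_1$ is at least the time at which $U$ first returns to the half-line $\{U\leq M/\beta\}$ after having climbed above it, and non-integrability of that return time would finish the proof.

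The one real subtlety is that $U(0)=0$ lies \emph{below} the threshold $M/\beta>0$, so a direct comparison is vacuous; I would first drive the walk above the threshold on an event of positive probability. Since $\xi(1)$ is nondegenerate with zero mean it is positive with positive probability, so there is $\gamma>0$ with $p_\gamma:=\mathbb{P}\{\xi(1)>\gamma\}>0$. Fixing an integer $k_0$ with $k_0\gamma>M/\beta$ and setting $E=\{\xi(i)>\gamma,\ 1\leq i\leq k_0\}$, an event of probability $p_\gamma^{k_0}>0$, I note that on $E$ every partial sum $U(s)$, $1\leq s\leq k_0$, is positive, so $S(1)=\xi(1)>0$ and, for $2\leq s\leq k_0$, $g(U(s-1))>0$ together with $h(\xi(s))>0$ give $S(s)>0$. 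Hence $S(1),\dots,S(k_0)>0$ and $U(k_0)\geq k_0\gamma>M/\beta$, i.e. on $E$ we have $T_1>k_0$ and the walk sits strictly above $M/\beta$ at time $k_0$.

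I would then invoke the strong Markov property at time $k_0$: the increments $\xi(k_0+1),\xi(k_0+2),\dots$ form a fresh nondegenerate zero-mean i.i.d. walk independent of $\mathcal{F}_{k_0}$, and $\nu:=\inf\{j\geq 1:U(k_0+j)\leq M/\beta\}$ is exactly the first passage of this fresh walk to the level $M/\beta-U(k_0)$, which on $E$ is strictly negative. Lemma \ref{Lem:Tboundedwalk}, applied with $b=M/\beta-U(k_0)<0$, then yields $\mathbb{E}[\nu\mid\mathcal{F}_{k_0}]=\infty$ on $E$. By the implication of the first paragraph, for every $t$ with $k_0<t\leq k_0+\nu$ one has $U(t-1)>M/\beta$ and therefore $S(t)>0$, so $T_1>k_0+\nu$ on $E$. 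Consequently $\mathbb{E}\,T_1\geq\mathbb{E}[\nu\,\mathbf{1}_E]=\mathbb{E}\big[\mathbf{1}_E\,\mathbb{E}[\nu\mid\mathcal{F}_{k_0}]\big]=\infty$, since $E\in\mathcal{F}_{k_0}$ has positive probability. The hard part is precisely this origin-start obstacle: because the threshold $M/\beta$ is positive one cannot compare $T_1$ to a passage time directly, so the argument must manufacture a positive-probability excursion lifting $U$ above $M/\beta$ while keeping $S$ positive, and only then transfer the non-integrability of Lemma \ref{Lem:Tboundedwalk} through the strong Markov property.
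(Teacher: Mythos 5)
Your proof is correct and takes essentially the same route as the paper's: both arguments lift $U$ above the threshold $M/\beta$ via a positive-probability run of positive increments (which keeps $S$ positive along the way), exploit the implication $U(t-1)>M/\beta \Rightarrow S(t)\geq \beta U(t-1)-M>0$, and then transfer non-integrability from Lemma \ref{Lem:Tboundedwalk}. The only difference is bookkeeping: the paper fixes $t$, uses the event inclusion to bound $\mathbb{P}\{\min_{j\leq t}S(j)>0\}$ below by $p^{L}\,\mathbb{P}\{\min_{k\leq t-L}U(k)>0\}$ and sums the tails (Lemma \ref{Lem:Tboundedwalk} with $b=0$), whereas you bound $T_1\geq k_0+\nu$ on $E$ and apply the lemma conditionally with the level $b=M/\beta-U(k_0)<0$.
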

\begin{proof}
Since $\{\xi_i(t), i\in\mathcal{V}, t\geq 1\}$ are nondegenerate i.i.d. variables with $\mathbb{E}\,\xi(1)=0$, it is easy to show that there exists $a>0, 0<p<1$ such that $P\{\xi_i(t)> a\}= p$. Let $L=\lceil\frac{M}{a\beta}\rceil$, $U_t^k=\sum_{i=t+1}^{t+k}\xi(i)$.

If $\xi(1)>a, \ldots, \xi(L)>a, \min_{1\leq k\leq t-L}U_{L}^k>0$, then for $L<j\leq t$, we have $U(j)>0$, hence $g(U(j))\geq \beta U(j)>0$, and
\begin{equation}\label{Equa:SjgeqL}
\begin{split}
  S(j)=&g(U(j-1))+h(\xi(j))\geq \beta U(j-1)-M\\
  =&\beta(U_L+U_L^{j-L})-M\\
  \geq& \beta aL+\beta U_L^{j-L}-M>0,\quad L< j\leq t.
\end{split}
\end{equation}
While for $1\leq j\leq L$, since $U(j)>aj>0$, by the properties of $g, h$, we know $g(U(j))>0, h(\xi(j))>0$, and thus
\begin{equation}\label{Equa:SjleqL}
\begin{split}
  S(j)=g(U(j-1))+h(\xi(j))>0,\quad 1\leq j\leq L.
\end{split}
\end{equation}
From (\ref{Equa:SjgeqL}) and (\ref{Equa:SjleqL}), we have for $t>L$,
\begin{equation}\label{Equa:UsubsetQ}
\begin{split}
  \Big\{\xi(1)>a, \ldots, \xi(L)>a, \min_{1\leq k\leq t-L}U_L^k>0\Big\}\subset\Big\{\min_{1\leq j\leq t} S(j)>0\Big\}.
  \end{split}
\end{equation}
By independence and Markov property, we have
\begin{equation}\label{Equa:probUmax}
\begin{split}
  \mathbb{P}\Big\{\xi_1>a, \ldots, \xi_L>a, \min_{1\leq k\leq t-L}U_L^k>0\Big\}=&p^L\mathbb{P}\Big\{\min_{1\leq k\leq t-L}U_{L}^k>0\Big\}\\
  =&p^L\mathbb{P}\Big\{\min_{1\leq k\leq t-L}U_k>0\Big\},
\end{split}
\end{equation}
then by (\ref{Equa:UsubsetQ}) and (\ref{Equa:probUmax}), we have for $t>L$,
\begin{equation}\label{Equa:probUlessprobQ}
  p^L\mathbb{P}\Big\{\min_{1\leq k\leq t-L}U_k>0\Big\}\leq \mathbb{P}\Big\{\min_{1\leq j\leq t} S(j)>0\Big\}.
\end{equation}
Denote $T_0=\inf\{t\geq 1: U(t)\leq 0\}$, then by Lemma \ref{Lem:Tboundedwalk},
\begin{equation}\label{Equa:a30}
  \mathbb{E}\,T_0=\infty.
\end{equation}
Hence by (\ref{Equa:probUlessprobQ}) and (\ref{Equa:a30}),
\begin{equation*}
  \begin{split}
  \mathbb{E}\,T_1=&\sum\limits_{t=1}^\infty\mathbb{P}\{T_1\geq t\}=1+\sum\limits_{t=1}^\infty\mathbb{P}\Big\{\min_{1\leq j\leq t} S(j)>0\Big\}\\
  \geq&\sum\limits_{t=L+1}^\infty\mathbb{P}\Big\{\min_{1\leq j\leq t} S(j)>0\Big\}\\
  \geq& p^L\sum\limits_{t=L+1}^\infty\mathbb{P}\Big\{\min_{1\leq k\leq t-L}U(k)>0\Big\}\\
  =&p^L\sum\limits_{t=1}^\infty\mathbb{P}\Big\{\min_{1\leq k\leq t}U(k)>0\Big\}\\
  =&p^L\sum\limits_{t=1}^\infty\mathbb{P}\{T_0\geq t+1\}\\
  =&p^L(\mathbb{E}\,T_0-1)=\infty.
  \end{split}
\end{equation*}
This completes the proof.
\end{proof}

Let $\{\xi(t), t\geq 1\}$ are nondegenerate i.i.d. variables in $\mathbb{R}^d$ with $\mathbb{E}\,\xi(1)=\textbf{0}$ and $U(t)=\sum_{i=1}^{t}\xi(i)$ is a random walk in $\mathbb{R}^d$, then $U(t)$ is said to be recurrent if $\lim\inf_{t\rightarrow\infty}\|U(t)\|=0$ a.s. and transient if $\|U(t)\|\rightarrow\infty$ a.s.
\begin{lem}\cite{Kallenberg2001}\label{Lem:recurrencerw}
$U(t)$ is recurrent for $d=1,2$ and transient for $d\geq 3$.
\end{lem}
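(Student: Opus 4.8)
The plan is to prove the recurrence/transience dichotomy through the characteristic function $\phi(u)=\mathbb{E}\,e^{i\langle u,\xi(1)\rangle}$, $u\in\mathbb{R}^d$, following the Chung--Fuchs approach. The first step is to reduce the almost-sure statements in the definition to a summability criterion. By a standard renewal and zero--one argument, the walk is \emph{either} recurrent \emph{or} transient (the intermediate possibilities are excluded by subadditivity of the return counts), and it is recurrent precisely when the expected number of visits to a neighborhood of the origin is infinite, i.e. when
\begin{equation*}
\sum_{t\ge 0}\mathbb{P}\{\|U(t)\|\le \eta\}=\infty
\end{equation*}
for some (equivalently, every) $\eta>0$. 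This converts the geometric dichotomy of Lemma~\ref{Lem:recurrencerw} into a question about a single numerical series.

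Next I would translate this summability condition into an integral in frequency space. Using Fourier inversion together with an Abelian summation $\sum_t r^t(\cdots)$, $0<r<1$, and letting $r\uparrow 1$, the Chung--Fuchs criterion asserts that recurrence is equivalent to
\begin{equation*}
\lim_{r\uparrow 1}\int_{[-\pi,\pi]^d}\mathrm{Re}\,\frac{1}{1-r\phi(u)}\,du=\infty ,
\end{equation*}
where a smoothing kernel is inserted so that the inversion applied to the indicator of a small ball is legitimate. The decisive observation is then a \emph{localization}: since $\xi(1)$ is nondegenerate, $|\phi(u)|<1$ for every $u\neq 0$ (away from the lattice periods in the lattice case), so the integrand is bounded on any region bounded away from $u=0$, and the convergence or divergence of the integral is governed entirely by the behavior of $\phi$ near the origin.

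It then remains to expand near the origin and integrate. Because the noise is bounded ($\|\xi\|\le\delta$ a.s.), $\xi(1)$ has a finite covariance matrix $\Sigma$, and since the walk is genuinely $d$-dimensional $\Sigma$ is positive definite; hence $\phi(u)=1-\tfrac12\langle u,\Sigma u\rangle+o(|u|^2)$, while the mean-zero assumption forces $\mathrm{Im}\,\phi(u)=O(|u|^3)=o(|u|^2)$. Consequently $\mathrm{Re}(1-\phi(u))\asymp|u|^2$ and $\mathrm{Re}\,\frac{1}{1-\phi(u)}\asymp|u|^{-2}$, so that in polar coordinates
\begin{equation*}
\int_{|u|<\delta}\frac{du}{|u|^{2}}\asymp\int_0^\delta r^{\,d-3}\,dr ,
\end{equation*}
which diverges for $d=1,2$ and converges for $d\ge 3$. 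Combining this with the criterion above yields recurrence for $d=1,2$ and transience for $d\ge 3$.

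The hardest part is the rigorous justification of the Chung--Fuchs criterion itself: the Fourier inversion applied to the indicator of a ball is delicate, requiring a smoothing kernel to produce an integrable transform, and one must pass the limit $r\uparrow 1$ through the integral while controlling $\mathrm{Re}\,(1-r\phi(u))^{-1}$ uniformly near the origin (a monotone/Tauberian argument). Secondary technical points are the treatment of the lattice or periodic case, where $\phi$ may attain the value $1$ at nonzero frequencies, and the positive definiteness of $\Sigma$, which is what ultimately ties the exponent $d-3$ in the radial integral to the ambient dimension and thereby pins the threshold between $d=2$ and $d=3$.
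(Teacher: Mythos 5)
The paper does not actually prove this lemma: it is quoted directly from the cited reference (Kallenberg), and the proof in that reference is precisely the Chung--Fuchs characteristic-function criterion you outline (dichotomy via expected occupation counts, localization of the Fourier integral at $u=0$, the expansion $\mathrm{Re}\,(1-\phi(u))\asymp \langle u,\Sigma u\rangle$ under mean zero and positive definite $\Sigma$, and the radial integral $\int_0^\delta r^{d-3}\,dr$ pinning the threshold between $d=2$ and $d=3$). Your sketch is correct in outline, honestly flags the genuinely delicate steps (the Tauberian passage $r\uparrow 1$ in the criterion, and the genuine $d$-dimensionality of the increments needed for transience when $d\ge 3$), and thus coincides with the cited source's argument rather than offering a different route.
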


\emph{Proof of Theorem \ref{Thm:mainunbounded}:}
For $d\geq 1$, we consider the following initial opinion configuration: there are two nonempty subgroups $\mathcal{V}_1, \mathcal{V}_2$, such that $\mathcal{V}_1\bigcup \mathcal{V}_2=\mathcal{V}$ and $\mathcal{V}_1\bigcap \mathcal{V}_2=\varnothing$. Additionally, $d_{\mathcal{V}_1}(0)\leq \epsilon,d_{\mathcal{V}_2}(0)\leq \epsilon$, and for any $i\in\mathcal{V}_1, j\in\mathcal{V}_2$, it satisfies $\| x_i(0)-x_j(0)\| >\sqrt{2}\epsilon+2\delta$. In other words, at initial time, the system consists of two separated subgroups. Under this initial opinion configuration,
Denote
\begin{equation*}
  T_0=\inf\limits_{t\geq 1}\Big\{t: \min\limits_{i\in\mathcal{V}_1, j\in\mathcal{V}_2}\| x_i(t)-x_j(t)\|\leq \epsilon\Big\},
\end{equation*}
then
\begin{equation}\label{Equa:T0leqT}
  T_0\leq T,\qquad a.s.
\end{equation}
and by (\ref{Model:HKmodelunbound}), we have for $t<T$, $i\in \mathcal{V}_k,\, k=1,2$,
\begin{equation}\label{Equa:xitvk}
\begin{split}
  x_i(t+1)=&\frac{1}{|\mathcal{V}_k|}\sum\limits_{j\in\mathcal{V}_k}x_j(t)+\xi_i(t+1)\\
  =&\frac{1}{|\mathcal{V}_k|}\sum\limits_{j\in\mathcal{V}_k}x_j(0)+\sum\limits_{l=1}^{t}\frac{\sum_{j\in\mathcal{V}_k}\xi_j(l)}{|\mathcal{V}_k|}+\xi_i(t+1).
  \end{split}
\end{equation}
Now for given $i\in\mathcal{V}_1, j\in\mathcal{V}_2$ and $t\geq 1$, we denote
\begin{equation}\label{Equa:dij1}
\begin{split}
  d_{ij}(t)=&\bigg(\frac{\sum_{l\in\mathcal{V}_1}x_l(0)}{|\mathcal{V}_1|}-\frac{\sum_{l\in\mathcal{V}_2}x_l(0)}{|\mathcal{V}_2|}\bigg)+
  \bigg(\sum\limits_{k=1}^{t-1}\frac{\sum_{l\in\mathcal{V}_1}\xi_l(k)}{|\mathcal{V}_1|}-\sum\limits_{k=1}^{t-1}\frac{\sum_{l\in\mathcal{V}_2}\xi_l(k)}{|\mathcal{V}_2|}\bigg)\\
  &+(\xi_i(t)-\xi_j(t))
\end{split}
\end{equation}
and
\begin{equation}\label{Equa:defiT1}
  T_1=\inf\limits_{t\geq 1}\{t: \min\limits_{i\in\mathcal{V}_1, j\in\mathcal{V}_2} d_{ij}(t)\leq \epsilon\},
\end{equation}
then by (\ref{Equa:xitvk})
\begin{equation}\label{Equa:T1leqT0}
  T_1= T_0,\qquad a.s.
\end{equation}
Let
\begin{equation*}
  y(t)=\frac{\sum_{l\in\mathcal{V}_1}\xi_l(t)}{|\mathcal{V}_1|}-\frac{\sum_{l\in\mathcal{V}_2}\xi_l(t)}{|\mathcal{V}_2|},\quad t\geq 1
\end{equation*}
and
\begin{equation*}
\begin{split}
  &Q_{ij}(1)=\xi_i(1)-\xi_j(1),\\
 &Q_{ij}(t)=\sum_{k=1}^{t-1}y(k)+(\xi_i(t)-\xi_j(t)), \quad t\geq 2,\\
  &Z(t)=\sum_{k=1}^ty(k), \,t\geq 1,
  \end{split}
\end{equation*}
then
\begin{equation}\label{Equa:QijZt}
  Q_{ij}(t+1)=Z(t)+(\xi_i(t+1)-\xi_j(t+1)),\qquad t\geq 1,
\end{equation}
and by (\ref{Equa:dij1})
\begin{equation}\label{Equa:dij2}
  d_{ij}(t)=\bigg(\frac{\sum_{l\in\mathcal{V}_1}x_l(0)}{|\mathcal{V}_1|}-\frac{\sum_{l\in\mathcal{V}_2}x_l(0)}{|\mathcal{V}_2|}\bigg)+Q_{ij}(t).
\end{equation}
Since $\mathbb{E}\,\xi_1(1)=\textbf{0}, \mathbb{E}\,\xi_1^2(1)\neq\textbf{0}$ and $\|\xi_i(1)\|\leq \delta$ a.s., we know $\{y(t), t\geq 1\}$ are i.i.d. with $\mathbb{E}\,y(t)=\textbf{0}, \mathbb{E}\,y^2(t)\neq\textbf{0}, \|y(t)\|\leq 2\delta$ a.s., $t\geq 1$.

(i) For $d=1$, by Theorem 3.1 of \cite{Su2019tac},
\begin{equation*}
  \mathbb{P}\{T<\infty\}=1.
\end{equation*}
To prove the integrability of $T$, we consider the former initial opinion configuration and suppose $x_i(0)>x_j(0)$ for any $i\in\mathcal{V}_1, j\in\mathcal{V}_2$ without loss of generality.
Denote
\begin{equation}\label{Equa:defTQ}
T_Q=\inf\limits_{t\geq 1}\Big\{t:\min\limits_{i\in\mathcal{V}_1, j\in\mathcal{V}_2}Q_{ij}(t)\leq 0\Big\},
\end{equation}
since $\frac{\sum_{l\in\mathcal{V}_1}x_l(0)}{|\mathcal{V}_1|}-\frac{\sum_{l\in\mathcal{V}_2}x_l(0)}{|\mathcal{V}_2|}>\epsilon$ by assumption, we know from (\ref{Equa:defiT1}), (\ref{Equa:dij2}) and (\ref{Equa:defTQ}) that
\begin{equation}\label{Equa:T1geqTQ}
  T_1\geq T_Q, \qquad a.s.
\end{equation}
Take $g(x)=x$ in Lemma \ref{Lem:Tgeneralrw} and consider of the boundedness of $\xi_i(t)$ (hence $h(x)$ in Lemma \ref{Lem:Tgeneralrw} is bounded),
\begin{equation}\label{Equa:TQinfty}
  \mathbb{E}\,T_Q=\infty,
\end{equation}
then by (\ref{Equa:T0leqT}), (\ref{Equa:T1leqT0}), (\ref{Equa:T1geqTQ}) and (\ref{Equa:TQinfty}), we obtain
\begin{equation*}
  \mathbb{E}\,T\geq \mathbb{E}\,T_1\geq \mathbb{E}\,T_Q=\infty.
\end{equation*}

(ii) For $d=2$, we first prove $\mathbb{P}\{T<\infty\}=1$. Considering (\ref{Equa:dij2}), and using the recurrence of random walk in $\mathbb{R}^2$ (Lemma \ref{Lem:recurrencerw}) and the homogenous $\epsilon$ of HK model (\ref{Model:HKmodelunbound}), then following a similar argument of Proposition 3.1 of \cite{Su2019tac}, we can obtain the conclusion.

Now we prove the non-integrability of $T$. Denote
\begin{equation*}
  x_i(t)=(x_i^1(t),x_i^2(t)),\, \xi_i(t)=(\xi_i^1(t),\xi_i^2(t)),\,i\in\mathcal{V}, t\geq 1.
\end{equation*}
Since
\begin{equation*}
  \min\limits_{i\in\mathcal{V}_1, j\in\mathcal{V}_2}\| x_i(0)-x_j(0)\| >\sqrt{2}\epsilon,
\end{equation*}
necessarily $\min\limits_{i\in\mathcal{V}_1, j\in\mathcal{V}_2}|x_i^k(0)-x_j^k(0)|>\epsilon$ holds for at least $k=1$ or $k=2$. Without loss of generality, we suppose $\min\limits_{i\in\mathcal{V}_1, j\in\mathcal{V}_2}|x_i^1(0)-x_j^1(0)|>\epsilon$.

Let
\begin{equation}\label{Equa:defiT1dim2}
  T_1=\inf\limits_{t\geq 1}\{t: \min\limits_{i\in\mathcal{V}_1, j\in\mathcal{V}_2} |x_i^1(t)-x_j^1(t)|\leq\epsilon\},
\end{equation}
then by the conclusion of $d=1$, we have
\begin{equation}\label{Equa:T1d2infty}
  \mathbb{E}\,T_1=\infty.
\end{equation}
Moreover, $\{d_\mathcal{V}(t)\leq\epsilon\}\subset\{\min\limits_{i\in\mathcal{V}_1, j\in\mathcal{V}_2} |x_i^1(t)-x_j^1(t)|\leq\epsilon\}$ for $t\geq 1$, hence
$T_1\leq T$ a.s., and it follows from (\ref{Equa:T1d2infty}) that
\begin{equation*}
  \mathbb{E}\,T\geq\mathbb{E}\,T_1=\infty.
\end{equation*}

(iii) For $d\geq 3$, by Lemma \ref{Lem:recurrencerw} we know $Z(t)$ in (\ref{Equa:QijZt}) is transient, and hence for any $M>0$,
\begin{equation*}
  \mathbb{P}\{\| Z(t)\|\leq M, i.o.\}<1.
\end{equation*}
By Hewitt-Savage 0-1 law, we have
\begin{equation}\label{Equa:Ztboundprob0}
  \mathbb{P}\{\| Z(t)\|\leq M, i.o.\}=0.
\end{equation}
Give any $O_1\in \mathbb{R}^{n\times d}$ with $\| O_1\|>0$, and for any $0<r<\| O_1\|$, denote
\begin{equation}\label{Equa:defT1unbound}
  T_1=\inf\{t\geq 1: Z(t)\in B_{O_1}(r)\},
\end{equation}
then we can prove
\begin{equation*}
 \mathbb{P}\{T_1<\infty\}<1.
\end{equation*}
In fact, let $O_2=O_1+Z(T_1)$, and for a random time $Y\geq 0$ a.s. denote
\begin{equation*}
  Z_Y(t)=\sum_{i=1}^ty(Y+i),
\end{equation*}
\begin{equation}
  T_2=\inf\{t\geq 1: Z(T_1)+Z_{T_1}(t)\in B_{O_2}(r)\},
\end{equation}
then by i.i.d. property of $\{y(t), t\geq 1\}$ and strong Morkov property
\begin{equation}
\begin{split}
   \mathbb{P}\{Z(T_1)+Z_{T_1}(t)\in B_{O_2}(r)\}\geq& \mathbb{P}\{T_1<\infty,Z_{T_1}(t)\in B_{O_2}(r)-Z(T_1)\}\\
   =&\mathbb{P}\{T_1<\infty\}\mathbb{P}\{Z(t)\in B_{O_1}(r)\}
\end{split}
\end{equation}
and hence
\begin{equation}\label{Equa:T2iterT1}
 \mathbb{P}\{T_2<\infty\}\geq\mathbb{P}^2\{T_1<\infty\}.
\end{equation}
Further, let $O_3=Z(T_1+T_2)-O_1$,
\begin{equation}
  T_3=\inf\{t\geq 1: Z(T_1 +T_2)+Z_{T_1+T_2}(t)\in B_{O_3}(r)\},
\end{equation}
then by the symmetry of $Z(t)$ and the strong Morkov property
\begin{equation}
\begin{split}
   \mathbb{P}\{Z(T_1+ T_2)+Z_{T_1+T_2}(t)\in B_{O_3}(r)\}\geq& \mathbb{P}\{T_1,T_2<\infty,Z_{T_1+T_2}(t)\in B_{O_3}(r)-Z(T_1+ T_2)\}\\
   =&\mathbb{P}\{T_1<\infty\}\mathbb{P}\{T_2<\infty\}\mathbb{P}\{Z(t)\in B_{-O_1}(r)\}\\
   =&\mathbb{P}\{T_1<\infty\}\mathbb{P}\{T_2<\infty\}\mathbb{P}\{-Z(t)\in B_{O_1}(r)\}\\
   =&\mathbb{P}\{T_1<\infty\}\mathbb{P}\{T_2<\infty\}\mathbb{P}\{Z(t)\in B_{O_1}(r)\}
\end{split}
\end{equation}
and hence
\begin{equation}\label{Equa:T3iterT1}
\begin{split}
 \mathbb{P}\{T_3<\infty\}\geq&\mathbb{P}\{T_1<\infty\}\mathbb{P}\{T_2<\infty\}\mathbb{P}\{T_1<\infty\}\\
 \geq&\mathbb{P}^4\{T_1<\infty\}.
 \end{split}
\end{equation}
Now, for $m\geq 2$, denote $T(m)=\sum_{i=1}^{m}T_i$, and
\begin{equation*}
  \begin{split}
  O_{2m}=O_1+Z(T(2m-1)), \,O_{2m+1}=Z(T(2m))-O_1,
  \end{split}
\end{equation*}
and
\begin{equation}
\begin{split}
  T_{2m}=&\inf\bigg\{t\geq 1: Z(T(2m-1))+Z_{T(2m-1)}(t)\in B_{O_{2m}}(r)\bigg\},\\
  T_{2m+1}=&\inf\bigg\{t\geq 1: Z(T(2m))+Z_{T(2m)}(t)\in B_{O_{2m+1}}(r)\bigg\},
  \end{split}
\end{equation}
then considering the symmetry of $Z(t)$ and the strong Morkov property, and following the argument of (\ref{Equa:T2iterT1}) and (\ref{Equa:T3iterT1}), we have
\begin{equation}\label{Equa:TniterT1}
\begin{split}
  \mathbb{P}\{T_{2m}<\infty\}\geq&\prod_{k=1}^{2m-1}\mathbb{P}\{T_k<\infty\}\mathbb{P}\{T_1<\infty\}\\
  \geq&(\mathbb{P}\{T_1<\infty\})^{L_1},\\
  \mathbb{P}\{T_{2m+1}<\infty\}\geq&\prod_{k=1}^{2m}\mathbb{P}\{T_k<\infty\}\mathbb{P}\{T_1<\infty\}\\
  \geq&(\mathbb{P}\{T_1<\infty\})^{L_2},
  \end{split}
\end{equation}
for some $L_1, L_2>0$.
If $\mathbb{P}\{T_1<\infty\}=1$, we know from (\ref{Equa:T2iterT1}), (\ref{Equa:T3iterT1}) and (\ref{Equa:TniterT1}) that
\begin{equation}\label{Equa:Tnleqinfty}
  \mathbb{P}\{T_m<\infty\}=1,\quad m\geq 1.
\end{equation}
Moreover, from the definition of $T_i$ we can obtain that
\begin{equation*}
  \mathbb{P}\bigg\{\| Z(T(m))\|\leq 2\| O_1\|\bigg\}=1,\,\,m\geq 1.
\end{equation*}
However, this contradict with (\ref{Equa:Ztboundprob0}). Hence,
\begin{equation}\label{Equa:T1lessinfty}
 \mathbb{P}\{T_1<\infty\}<1.
\end{equation}
Denote
\begin{equation*}
  O_1=\frac{\sum_{l\in\mathcal{V}_2}x_l(0)}{|\mathcal{V}_2|}-\frac{\sum_{l\in\mathcal{V}_1}x_l(0)}{|\mathcal{V}_1|},
\end{equation*}
then $\| O_1\|>\sqrt{2}\epsilon+2\delta$ by assumption, and this together with (\ref{Equa:dij2}) imply
\begin{equation*}
  \{d_\mathcal{V}(t)\leq\epsilon\}\subset\{Z(t)\in B_{O_1}(\epsilon)\},
\end{equation*}
hence by (\ref{Equa:defstopT}) and (\ref{Equa:defT1unbound}), we have
\begin{equation}\label{Equa:TgeqT1}
  T\geq T_1, \qquad a.s.
\end{equation}
and then by (\ref{Equa:T1lessinfty}),
\begin{equation}\label{Equa:Tlessinfty}
 \mathbb{P}\{T<\infty\}\leq \mathbb{P}\{T_1<\infty\}<1.
\end{equation}
Necessarily,
\begin{equation*}
  \mathbb{E}\,T=\infty.
\end{equation*}
This completes the proof.
\hfill $\Box$

\section{Discussion  and Conclusions}\label{Section:Conclusions}

In this paper, we study the synchronization of multi-dimensional HK dynamics in bounded and unbounded space of noisy environment. Via investigating the random time when the system attains quasi-synchronization, it is revealed that both the boundedness and dimension of space determine different properties of synchronization. The findings can also reveal some interesting implications for applying high-dimensional HK models in real-world social systems. For example, in social networks where opinion formation occurs in high-dimensional spaces (e.g., multiple topics or attributes), imposing bounds on range of opinion values can potentially facilitate achieving synchronization or consensus. On the other hand, when agents' opinions span an unbounded space, the role of dimension of the space becomes critical, with higher dimensions possibly leading to the prevention of consensus or synchronization.

Our results highlight the critical role of boundedness in the emergence of self-organizing order under the influence of noise. Specifically, in bounded spaces, the system can achieve quasi-synchronization in finite time for all dimensions, demonstrating how spatial constraints facilitate the transition from disorder to order in the presence of stochastic disturbances. In contrast, in unbounded spaces, the lack of spatial constraints makes synchronization more challenging, particularly in higher dimensions, where noise can only induce synchronization with a positive probability. These findings underscore the importance of boundedness as a fundamental mechanism for the emergence of order in self-organizing systems driven by random factors.

Our results have important implications for fields where the dynamics of networks of interacting systems are ubiquitous, such as social, economic, and natural sciences. Furthermore, our findings may pave the way for future studies focused on the noise-based control of complex networked systems in high-dimensional spaces.

\end{document}